\documentclass[12pt,reqno]{amsart}
\usepackage{amsmath,amssymb,graphicx,mathrsfs,amsopn,stmaryrd,amsthm,amscd,bm,extarrows}
\usepackage{newtxtext}
\usepackage[dvipsnames]{xcolor}
\usepackage{ytableau}
\usepackage[colorlinks=true,citecolor=blue,linkcolor=blue]{hyperref}
\usepackage[english]{babel}
\usepackage[mathscr]{euscript}
\usepackage[left=2.5cm,
            right=2.5cm,
            top=2.85cm,
            bottom=2.75cm
            ]{geometry}

\usepackage[alphabetic,nobysame]{amsrefs}
\renewcommand{\MR}[1]{} 
\usepackage{collectbox}


\usepackage{dsfont}
\usepackage{tikz}    
\usetikzlibrary{arrows,matrix,decorations.markings,shapes.geometric}   
\usetikzlibrary{decorations.pathreplacing}

\usepackage{xy}
\allowdisplaybreaks
\xyoption{all}
\numberwithin{equation}{section}
\newtheorem{thm}{Theorem}[section]

\newtheorem{lem}[thm]{Lemma}

\theoremstyle{definition} 

\newtheorem{dfn}[thm]{Definition}
\theoremstyle{remark}
\newtheorem{rem}[thm]{Remark}

\newcommand{\beq}{\begin{equation}}
\newcommand{\eeq}{\end{equation}}
\newcommand{\be}{\begin{equation*}}
\newcommand{\ee}{\end{equation*}}

\newcommand{\bC}{\mathbb{C}}

\newcommand{\bZ}{\mathbb{Z}}
\newcommand{\bK}{\mathbb{K}}

\newcommand{\bN}{\mathbb{N}}
\newcommand{\I}{{\mathbb I}}
\newcommand{\iI}{{}^\imath\I}

\newcommand{\hf}{\tfrac12}
\newcommand{\arxiv}[1]{\href{http://arxiv.org/abs/#1}{\tt arXiv:\nolinkurl{#1}}}

\newcommand{\U}{\mathbf{U}}
\newcommand{\Ui}{\mathbf{U}^{\imath}}

\newcommand{\g}{\mathfrak{g}}

\newcommand{\id}{{\mathrm{id}}}

\newcommand{\pa}{\partial}

\newcommand{\gge}{\geqslant}
\newcommand{\lle}{\leqslant}
\newcommand{\la}{\lambda}

\newcommand{\ka}{\kappa}
\newcommand{\W}{{\mathbf W}}

\newcommand{\Z}{{\mathbf Z}}

\newcommand{\Sym}{{\mathrm{Sym}}}

\newcommand{\aTheta}{{\acute{\Theta}}}

\newcommand{\bv}{\mathbf{v}}
\newcommand{\bw}{\mathbf{w}}

\newcommand{\brown}[1]{{\color{brown}#1}}

\begin{document}
\pagestyle{myheadings}
\setcounter{page}{1}

\title[GKLO-type representations of shifted affine $\mathrm{i}$quantum groups]{Shifted affine $\mathrm{i}$quantum groups of quasi-split ADE types}

\author{Kang Lu}
\address{Shenzhen International Center for Mathematics and Department of Mathematics, Southern University of Science and Technology, Shenzhen, China}\email{kang.lu.math@gmail.com, luk@sustech.edu.cn}
\author{Weiqiang Wang}
\address{Department of Mathematics, University of Virginia, 
Charlottesville, VA 22903, USA}\email{ww9c@virginia.edu}
\author{Alex Weekes}
\address{Département de mathématiques, Université de  Sherbrooke, Sherbrooke, QC, Canada}
\email{alex.weekes@usherbrooke.ca}

\subjclass[2020]{Primary 17B37.}
\keywords{Drinfeld presentation, affine iquantum groups}

\begin{abstract}
We formulate shifted affine iquantum groups of arbitrary quasi-split ADE types via Drinfeld presentations. We construct GKLO-type representations of shifted affine iquantum groups via algebras of difference operators, which allow us to construct truncated shifted affine iquantum groups. This provides a q-deformation of truncated shifted iYangians in our prior work arising as a quantization of affine Grassmannian islices. 
\end{abstract}
	
\maketitle

\thispagestyle{empty}

\section{Introduction}

The interaction between geometric representation theory and quantum algebras has been fruitful and beneficial in both directions. 
The GKLO representations of shifted Yangians have played an indispensable role in bridging the connections between affine Grassmannian slices, Coulomb branches, and shifted Yangians; see \cite{GKLO, KWWY14, BFN2}. Moreover, their $K$-theoretic counterparts lead naturally to multiplicative analogues, namely multiplicative affine Grassmannian slices, $K$-theoretic Coulomb branches, and shifted quantum affine algebras; see \cite{FT19,FT19b}. In both contexts, GKLO-type representations \cite{GKLO, GKLO05b} arise as explicit realizations of these algebras by (additive or multiplicative) difference operators, reflecting the underlying integrable systems and providing a concrete coordinate function realization for the quantum algebras.

More recently, this picture has been extended to a general $\imath$-setting \cite{LWW25iSlcies, LWW25iY}. In this framework, one replaces the usual affine Grassmannian slices by their fixed-point loci under involutions, known as affine Grassmannian \emph{islices}. The latter are quantized by shifted iYangians (or twisted Yangians in Drinfeld presentations) associated to arbitrary quasi-split ADE symmetric pairs, which admit so-called iGKLO representations incorporating the additional symmetry imposed by the involution. This development has revealed a parallel and highly structured $\imath$-generalization, in which geometry, algebra, and explicit operator realizations continue to align in a strikingly coherent manner. We mention that closely related constructions with varied generalities for type AI or AIII have also been carried out by several groups in \cite{SSX25, BPT25, Wan26} with different perspectives.

The purpose of the present work is to introduce a $q$-deformation of the constructions in \cite{LWW25iY}. More precisely, we formulate and investigate \emph{shifted affine iquantum groups of arbitrary quasi-split ADE type}, and construct for them a family of iGKLO-type representations by means of algebras of $q$-difference operators. 

Let $\g$ be a simple Lie algebra of type ADE. Let $\tau$ be an involution  of its Dynkin diagram of $\g$ (with $\tau=\id$ allowed), leading to a quasi-split Satake diagram $(\I,\tau)$. Associated to this data is a (universal) affine quantum symmetric pair $(\U,\Ui)$ from \cite{LW22} (different from earlier versions of Letzter and Kolb), where $\Ui$ contains generators $\bK_i$, for all $i\in \I$. The Drinfeld-type current presentations for these affine iquantum groups $\Ui$ have been established in recent years \cite{LW21, LWZ24, LPWZ25}. In this paper, we introduce shifted affine iquantum groups, denoted $\Ui_\mu$, depending on an integral coweight $\mu$, by suitably modifying the Drinfeld presentations of $\Ui$.

The algebra $\Ui_\mu$ is generated by $\bK_i^{\pm1}$ and current-type elements in generating function forms
$B_i(u)$ and $\aTheta_i(u),$
subject to relations that simultaneously generalize:
\begin{itemize}
    \item the Drinfeld presentation of affine quantum and iquantum groups,
    \item and the shift/truncation conditions familiar from shifted Yangians and iYangians.
\end{itemize}
A key feature is the presence of the diagram involution $\tau$, which couples the generators associated to $i$ and $\tau i$, leading to modified commutation and Serre-type relations. The shift parameter $\mu$ leads to various truncations, which are expected to correspond to \emph{multiplicative} affine Grassmannian islices, finite-dimensional varieties arising via the multiplicative version of \cite{LWW25iSlcies}.


The main result of the paper is the construction of \emph{iGKLO representations} of the shifted affine iquantum groups. Concretely, for a pair of coweights $\mu \lle \lambda$ with $\la$ dominant, we construct an explicit algebra homomorphism (see Theorem \ref{thmq})
\[
\Phi^\lambda_\mu : \mathbf{U}^\imath_\mu \longrightarrow \mathscr{A}^q.
\]
Here the ``quantum torus" $\mathscr{A}^q$ is a natural $q$-analogue of the algebra introduced in \cite{LWW25iY}. More precisely, $\mathscr{A}^q$  is an algebra generated by the multiplicative ``coordinate" variables $w_{i,r}$ and $q$-difference operators $\partial_{i,r}$ (which act by $w_{i,r} \mapsto q^2 w_{i,r}$)
subject to natural commutation relations. The image of $B_i(u)$ under $\Phi^\lambda_\mu$ takes the form of a sum over indices $r$, with coefficients involving:
\begin{itemize}
    \item delta functions $\delta(\frac{w_{i,r}}{qu})$,
    \item rational functions encoding interactions between nodes of the Satake diagram,
    \item and the $q$-difference operators $\partial_{i,r}^{\pm1}$.
\end{itemize}

The iGKLO representations provide a concrete and computable realization of the algebra $\Ui_\mu$, and can be viewed as a multiplicative analogue of the iGKLO representations constructed previously for shifted iYangians \cite{LWW25iY}. Note that some of the series $B_i(u)$ here contain an inhomogeneous constant term (this feature already appeared in the setting of shifted iYangians \cite{LWW25iSlcies, LWW25iY} which was demanded by its geometric applications). In the case of symmetric pairs of diagonal type, our constructions specialize to (variants of) shifted affine quantum groups and their GKLO representations in \cite{FT19}. 

The proof of the main Theorem   \ref{thmq} proceeds by verifying that the explicit formulas preserve all defining relations of the shifted affine iquantum group, including the most intricate Serre-type relations. The non-split case requires additional technical arguments beyond the split case.


One of the main motivations for this work comes from geometric representation theory. In our earlier work \cite{LWW25iSlcies}, shifted twisted Yangians were shown to arise as quantizations of affine Grassmannian islices. The present construction enables the definition of \emph{truncated shifted affine $\imath$-quantum groups}, which are expected to quantize the top-dimensional components of multiplicative affine Grassmannian islices, and to relate to $K$-theoretic iCoulomb branches. We hope that the formulas offered in this paper will facilitate making such  connections precise.


The paper is organized as follows.
In Section \ref{sec:shiftediQG}, we introduce shifted affine iquantum groups using a variant of the Drinfeld presentation.
In Section \ref{sec:iGKLO}, we define the algebra of $q$-difference operators and construct the iGKLO representations.
Sections \ref{sec:pf-split}--\ref{sec:pf-quasi-split} are devoted to the proof of the main Theorem \ref{thmq}, treating the split and non-split cases separately. 

This paper expands on our unpublished notes written in early 2024 and was announced in \cite{LWW25iSlcies, LWW25iY}. A version of the GKLO-type construction for shifted affine iquantum groups of {\em split} ADE type has also appeared in \cite{LP26}.

\vspace{2mm}
\noindent {\bf Acknowledgement.}
W.W. is partially supported by the NSF grant DMS-2401351. He thanks Academia Sinica and the NSTC in Taipei as well as the University of Hong Kong for providing a stimulating environment and support during his visits. A.W.~is supported by an NSERC Discovery Grant. 

\section{Shifted affine iquantum groups of quasi-split ADE type}
\label{sec:shiftediQG}

In this section, we formulate the notion of shifted affine iquantum groups of arbitrary quasi-split ADE type. 

\subsection{Affine iquantum groups}

Let $C=(c_{ij})_{i,j\in \I}$ be the Cartan matrix of type ADE, and let $\g$ be the corresponding simple Lie algebra. We fix a simple system $\{\alpha_i\mid i\in \I\}$ with corresponding set $\Delta^+$ of positive roots. Let $\tau$ be an involution of the  Dynkin diagram of $\g$, i.e., $c_{ij}=c_{\tau i,\tau j}$ such that $\tau^2=\mathrm{id}$; note that $\tau=\id$ is allowed. We refer to $(\I,\tau)$ as a quasi-split Satake diagram and call the Satake diagram split if $\tau=\id$. The split Satake diagrams formally look identical to Dynkin diagrams.
Denote $\I_0$ the set of fixed points of $\tau$ in $\I$, i.e., $\I_0 =\{i\in \I \mid \tau i=i\}$. Let $\I_1$ be a set of representatives for $\tau$-orbits in $\I$ of length 2 and define $\I_{-1}=\tau \I_1$; $\I_1$ can be conveniently chosen so that its underlying Dynkin subdiagram is connected. Then $\I =\I_1 \sqcup \I_0 \sqcup \I_{-1}$. Set \begin{align}  \label{eq:iI}
\iI=\I_1 \sqcup \I_0.
\end{align}

We first recall the Drinfeld presentations of affine iquantum groups $\Ui$ of quasi-split ADE type, following the constructions from \cite{LW21,LWZ24,LPWZ25}; the $\Ui$ used here is the universal version from \cite{LW22} (different from Letzter-Kolb), which contains generators $\bK_i$ for all $i\in \I$. Different relations depending on 
Cartan integers are required as there are several non-isomorphic subalgebras of $\Ui$ of affine rank one and two types. 

For $n\in \bZ$, let $[n]=\frac{q^{n}-q^{-n}}{q-q^{-1}}$. Introduce the generating series
\begin{align*}
B_i(u)=\sum_{r\in\bZ}B_{i,r}u^r,\quad \Theta_i(u)=1+\sum_{r>0}(q-q^{-1})\Theta_{i,r}u^r,\quad \delta(u)=\sum_{r\in \bZ} u^r,
\end{align*}
and the following notation
\begin{align*}
&\mathbb S_{i,j}(u_1,u_2|v)=\mathrm{Sym}_{u_1,u_2}\big[B_i(u_1),[B_i(u_2),B_j(v)]_q\big]_{q^{-1}}\\
&=\Sym_{u_1,u_2}\big(B_j(v)B_{i}(u_1)B_{i}(u_2)-[2]B_i(u_1)B_j(v)B_{i}(u_2)+B_{i}(u_1)B_{i}(u_2)B_j(v)\big).
\end{align*}
Here and below we denote $[X,Y]_v=XY-vYX$. 
We will frequently use the identities
\beq\label{delta}
f(u)\delta\Big(\frac{a}{u}\Big)=f(a)\delta\Big(\frac{a}{u}\Big),\qquad G(u,v)\delta\Big(\frac{a}{u}\Big)\delta\Big(\frac{b}{v}\Big)=G(a,b)\delta\Big(\frac{a}{u}\Big)\delta\Big(\frac{b}{v}\Big).
\eeq
\begin{dfn}
The quasi-split affine iquantum group $\Ui$ is generated by the elements $B_{i,r},\Theta_{i,s},\bK_{i}^{\pm 1}$, where $i\in \I$, $r\in\bZ$, $s\in \bZ_{>0}$, subject to the following relations, for $i,j\in \I$:
\begin{align}
&[\bK_i,\bK_j]=[\bK_i,\Theta_j(u)]=[\Theta_i(u),\Theta_j(v)]=0,\label{HH}\\
&\bK_iB_j(u)=q^{c_{\tau i,j}-c_{ij}}B_j(u)\bK_i,\label{HB0}\\
&\Theta_i(u)B_j(v)=\frac{1-q^{-c_{ij}}uv^{-1}}{1-q^{c_{ij}}uv^{-1}}\cdot \frac{1-q^{c_{\tau i,j}}uv }{1-q^{-c_{\tau i,j}}uv}B_j(v)\Theta_i(u),\label{HB}\\
&\big[B_i(u),B_{\tau i}(v)\big]=\frac{\delta(uv)}{q-q^{-1}}(\bK_{\tau i}\Theta_i(u)-\bK_i\Theta_{\tau i}(v)),\quad \text{ if }c_{i,\tau i}=0, \label{BBitaui0}\\
&(q^2u-v)B_i(u)B_i(v)+(q^2v-u)B_i(v)B_i(u)\notag\\&\qquad=q^{-2}\frac{\delta(uv)}{q-q^{-1}}\big((q^2u-v)\Theta_i(v)+(q^2v-u)\Theta_i(u)\big)\bK_i,\quad \text{if }i=\tau i,\label{BBi=taui0}\\
&(q^{-1}u-v)B_i(u)B_{\tau i}(v)+(q^{-1}v-u)B_{\tau i}(v)B_i(u)\notag\\&\qquad=\frac{\delta(uv)}{1-q^{2}}\big((u-qv)\bK_i\Theta_{\tau i}(v)+( v-qu)\bK_{\tau i}\Theta_i(u)\big),\quad \text{if }c_{\tau i,i}=-1,\label{BBitaui-1}\\
&[B_i(u),B_j(v)]=0,\qquad \text{if }c_{ij}=0, j\ne \tau i,\\
&(q^{c_{ij}}u-v)B_i(u)B_j(v)+(q^{c_{ij}}v-u)B_j(v)B_i(u)=0,\quad\text{if }j\ne \tau i,
\end{align}
and the Serre relations: for $c_{ij}=-1$,$j\neq \tau i\neq i$,
\[
    \mathbb S_{i,j}(u_1,u_2|v) =0,
   \]
and  for $c_{ij}=-1$, $i=\tau i$,
\beq
\begin{split}\label{SSGF2}
\mathbb S_{i,j}(u_1,u_2|v)
&= -\frac{\delta(u_1u_2)}{q-q^{-1}}  \Sym_{u_1,u_2} \left(\frac{[2] v u_1^{-1} }{1 -q^{2}u_2u_1^{-1}}[\Theta_i(u_2), B_j(v)]_{q^{-2}} \right.
 \\ 
&\qquad\qquad\qquad\qquad\qquad
+ \left.\frac{1 +u_2u_1^{-1}}{1 -q^{2}u_2u_1^{-1}}[B_j(v),\Theta_i(u_2)]_{q^{-2}} \right)\bK_i, 
\end{split}
\eeq
and  for $c_{i,\tau i}=-1$,
\beq
\begin{split}\label{SSGF3}
	& \mathbb{S}_{i,\tau i}(u_1,u_2|v)= \frac{1}{q-q^{-1}}\Big(-q^{-1}[2]\Sym_{u_1,u_2}\delta(u_2 v) \frac{1-q u_2^{-1} v}{1-q^{-2} u_1 u_2^{-1}} B_i(u_1)\Theta_{\tau i}(v)\bK_i
			\\ 
	&~~\qquad\qquad\qquad\qquad \quad  +[2]\Sym_{u_1,u_2}\delta(u_2 v) \frac{1-qu_2^{-1}v}{1-q^2 u_1 u_2^{-1}}\Theta_{\tau i}(v)\bK_i B_i(u_1)
			\\ 
	&~~\qquad\qquad\qquad\qquad\quad +q[2]\Sym_{u_1,u_2}\delta(u_2 v)\frac{u_1^{-1}v-q u_1^{-1} u_2}{1-q^2 u_1^{-1} u_2} B_i(u_1)\Theta_{i}(u_2)\bK_{\tau i}
			\\
	&~~\qquad\qquad\qquad\qquad\quad+q^{-2}[2]\Sym_{u_1,u_2}\delta(u_2 v) \frac{q u_1^{-1}u_2 - u_1^{-1}v}{1-q^{-2}u_1^{-1}u_2} \Theta_{i}(u_2)\bK_{\tau i} B_i(u_1)\Big). 
\end{split}
\eeq
\end{dfn}
\begin{rem}
Note that for simplicity we do not include the central generators $C^{\pm 1}$. To recover the version with $C^{\pm 1}$, it suffices to do the substitution $u\to u C^{\frac12}$ for the spectral parameter; see e.g. \cite[\S4.3]{Lu24iso}.
\end{rem}

\subsection{A variant of Drinfeld presentation}
For our purpose, it is convenient to work with a somewhat different presentation, using another set of imaginary root vectors defined by the rule (cf. \cite[Lemma 2.9]{LW21}):
\beq\label{NewCartan}
\acute{\Theta}_i(u)=\sum_{r\gge 0}\acute{\Theta}_{i,r}u^{r}:=\frac{1-q^{-c_{\tau i,i}}u^2}{1-u^2}\bK_{\tau i}\Theta_i(u).
\eeq
The following lemma indicates that certain relations can be written in a simpler form.

\begin{lem}
Assuming the validity of the relations \eqref{HH}--\eqref{HB}, the following equivalences hold.
\begin{enumerate}
	\item The relation \eqref{BBi=taui0} is equivalent to 
\beq\label{BBi=taui}
(q^2u-v)B_i(u)B_i(v)+(q^2v-u)B_i(v)B_i(u)=\frac{\delta(uv)(u-v)}{q^{-1}-q}(\acute{\Theta}_i(u)-\acute{\Theta}_i(v)).
\eeq
	\item The relation \eqref{BBitaui-1} is equivalent to 
\beq
(q^{-1}u-v)B_i(u)B_{\tau i}(v)+(q^{-1}v-u)B_{\tau i}(v)B_i(u)=\frac{\delta(uv)(u-v)}{q^{2}-1}(\acute{\Theta}_i(u)-\acute{\Theta}_{\tau i}(v)).
\eeq
	\item The Serre relation \eqref{SSGF2} is equivalent to 
\beq\label{NewSerre1}
\mathbb S_{i,j}(u_1,u_2|v)=\frac{\delta(u_1u_2)(u_1-u_2)v}{(qu_1-v)(qu_2-v)}(\acute{\Theta}_i(u_2)-\acute{\Theta}_{i}(u_1))B_j(v).
\eeq
	\item The Serre relation \eqref{SSGF3} is equivalent to 
\beq\label{NewSerre2}
\begin{split}
\mathbb S_{i,j}(u_1,u_2|v)=\Sym_{u_1,u_2}\frac{\delta(u_2v)(1+q^2)(1-v^2)}{(1-qu_1^{-1}v)(1-q^2u_1v)}\big(\acute{\Theta}_i(u_2)-\acute{\Theta}_{\tau i}(v)\big)B_i(u_1).
\end{split}
\eeq
\end{enumerate}
\end{lem}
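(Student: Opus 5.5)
The four equivalences are identities between formal distributions. The plan is to show that, for each relation, the right-hand side of the original form equals the right-hand side of the new form. This equality will use the definition \eqref{NewCartan} of $\aTheta_i(u)$, the delta-function substitution rule \eqref{delta}, and the relations \eqref{HH}--\eqref{HB}. Since the left-hand sides coincide verbatim, each equivalence reduces to this single identity.

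\textbf{Parts (1) and (2).} The key observation is that on the support of $\delta(uv)$ we may set $v=u^{-1}$. With this substitution, the rational prefactor $\frac{1-q^{-c_{\tau i,i}}u^2}{1-u^2}$ in \eqref{NewCartan} becomes a function of $u$ alone. In part (1), where $i=\tau i$ and $c_{ii}=2$, we have $\aTheta_i(u)=\frac{1-q^{-2}u^2}{1-u^2}\bK_i\Theta_i(u)$. The plan is then:
\begin{itemize}
\item expand $\frac{\delta(uv)(u-v)}{q^{-1}-q}\big(\aTheta_i(u)-\aTheta_i(v)\big)$;
\item use $(u-v)\delta(uv)=(u-u^{-1})\delta(uv)$, so that $(u-v)\frac{1-q^{-2}u^2}{1-u^2}$ becomes $-u^{-1}(1-q^{-2}u^2)=-(u^{-1}-q^{-2}u)$;
\item do the same for the $v$-term;
\item compare with $q^{-2}\big((q^2u-v)\Theta_i(v)+(q^2v-u)\Theta_i(u)\big)\bK_i$, again under $v=u^{-1}$.
\end{itemize}
Here $\bK_i$ commutes with $\Theta_i$ by \eqref{HH}. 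Part (2) is handled identically, with $c_{\tau i,i}=-1$, so that $\aTheta_i(u)=\frac{1-qu^2}{1-u^2}\bK_{\tau i}\Theta_i(u)$. One point needs care: the factor $\frac{1}{1-u^2}$ has a pole at $u^2=1$, and the product with $(u-v)\delta(uv)$ must be interpreted as a formal series. I would check that the pole cancels against $u-u^{-1}$, so that one only ever multiplies $\delta(uv)$ by Laurent polynomials. This makes \eqref{delta} legitimately applicable.

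\textbf{Parts (3) and (4).} These are heavier. For (3), I would first substitute $u_2=u_1^{-1}$ in the right-hand side of \eqref{SSGF2}, and then rewrite the $q^{-2}$-commutators $[\Theta_i(u_2),B_j(v)]_{q^{-2}}$ and $[B_j(v),\Theta_i(u_2)]_{q^{-2}}$ in the form (rational function)$\cdot\Theta_i(u_2)B_j(v)$. This uses \eqref{HB} with $c_{ij}=-1=c_{\tau i,j}$, which gives
\[
B_j(v)\Theta_i(u)=\Big(\frac{1-q u v^{-1}}{1-q^{-1}uv^{-1}}\cdot\frac{1-q^{-1}uv}{1-quv}\Big)^{-1}\Theta_i(u)B_j(v).
\]
Everything then becomes a scalar rational function times $\Theta_i(u_2)\bK_iB_j(v)$, symmetrized in $u_1,u_2$. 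The next steps are:
\begin{itemize}
\item convert $\Theta_i\bK_i$ to $\aTheta_i$ by dividing by $\frac{1-q^{-2}u_2^2}{1-u_2^2}$;
\item use \eqref{HB0} to move $\bK_i$ past $B_j(v)$; the exponent $c_{\tau i,j}-c_{ij}=0$ here, so $\bK_i$ commutes with $B_j(v)$;
\item simplify the resulting symmetrized rational function, using $u_2=u_1^{-1}$ throughout, and match it with $\frac{(u_1-u_2)v}{(qu_1-v)(qu_2-v)}$, together with the sign swap between the $u_1$ and $u_2$ terms.
\end{itemize}
For (4), the same strategy applies with the substitution $v=u_2^{-1}$ from $\delta(u_2v)$. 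In \eqref{SSGF3}, each $\Theta B$ or $B\Theta$ term is reordered into the form $\aTheta(\cdot)B_i(u_1)$ via \eqref{HB}, using $c_{ii}=2$ and $c_{\tau i,i}=-1$, and $\bK_i,\bK_{\tau i}$ are moved across $B_i$ via \eqref{HB0}. The four terms then collapse pairwise:
\begin{itemize}
\item the $\Theta_{\tau i}(v)\bK_i$ terms combine into the $\aTheta_{\tau i}(v)$ contribution;
\item the $\Theta_i(u_2)\bK_{\tau i}$ terms combine into the $\aTheta_i(u_2)$ contribution.
\end{itemize}
Each pair should produce the common coefficient $\frac{(1+q^2)(1-v^2)}{(1-qu_1^{-1}v)(1-q^2u_1v)}$ with opposite signs.

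The main obstacle is this final rational-function bookkeeping in (4). There are four terms, each with a $\delta$ substitution, a reordering factor from \eqref{HB}, a power of $q$ from \eqref{HB0}, and a symmetrization. It is also delicate to expand the rational functions in the correct direction, so that products with $\delta$ are well defined. I would organize the computation by first reducing everything to coefficients of $\aTheta_{\tau i}(u_2^{-1})B_i(u_1)$ and $\aTheta_i(u_2)B_i(u_1)$ before symmetrizing, and then verify each coefficient separately as an identity of rational functions in $u_1,u_2$. Since all manipulations are invertible given \eqref{HH}--\eqref{HB}, each computation proves the equivalence in both directions.
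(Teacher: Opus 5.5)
Your proposal is correct and follows essentially the same route as the paper: substitute \eqref{NewCartan}, reorder via \eqref{HB0}--\eqref{HB}, use \eqref{delta} to trade $v$ for $u_2^{-1}$ (resp.\ $u_1^{-1}$ for $u_2$), and simplify the rational functions. In particular, the four terms of \eqref{SSGF3} collapse pairwise exactly as you predict. The only deviation is in part (3): the paper applies this argument to the equivalent form \cite[Lemma 4.2]{SW26} of \eqref{SSGF2}, whereas you work with \eqref{SSGF2} directly, and that direct computation does close. On $u_1u_2=1$ the symmetrized bracket collapses to $\frac{(1-q^{-2})v(1-q^{-2}u_2^2)}{(v-qu_2)(1-q^{-1}u_2v)}\,\Theta_i(u_2)B_j(v)$, which after passing to $\aTheta_i(u_2)$ yields \eqref{NewSerre1}.
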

\begin{proof}
The arguments for these relations via the identities \eqref{delta} are all similar. Hence we only prove the most complicated relations \eqref{NewSerre1}--\eqref{NewSerre2}, skipping the others. 

We start with \eqref{NewSerre2}. We substitute \eqref{NewCartan} into
\[
-q^{-1}\delta(u_2 v) \frac{1-q u_2^{-1} v}{1-q^{-2} u_1 u_2^{-1}} B_i(u_1)\Theta_{\tau i}(v)\bK_i+\delta(u_2 v) \frac{1-qu_2^{-1}v}{1-q^2 u_1 u_2^{-1}}\Theta_{\tau i}(v)\bK_i B_i(u_1)
\]
and use \eqref{HB0}--\eqref{HB} to move $B_i(u_1)$ to the right. Since $\delta(u_2v)$ is in each term, by \eqref{delta} we are free to interchange $v$ and $u_2^{-1}$. Then we obtain
\begin{align*}
-\delta(u_2v)\frac{q^2(1-v^2)(1-q^{-1}u_1^{-1}v)}{(1-qu_1^{-1}v)(1-q^2u_1v)}\acute{\Theta}_{\tau i}(v)B_i(u_1)	+\delta(u_2v)\frac{1-v^2}{1-q^2u_1u_2^{-1}}\acute{\Theta}_{\tau i}(v)B_i(u_1),
\end{align*}
which simplifies to
\beq\label{helper01}
\frac{\delta(u_2v)(1-v^2)(1-q^2)}{(1-qu_1^{-1}v)(1-q^2u_1v)}\acute{\Theta}_{\tau i}(v)B_i(u_1).
\eeq
A similar calculation shows that
\[
q\delta(u_2 v)\frac{u_1^{-1}v-q u_1^{-1} u_2}{1-q^2 u_1^{-1} u_2} B_i(u_1)\Theta_{i}(u_2)\bK_{\tau i}+q^{-2}\delta(u_2 v) \frac{q u_1^{-1}u_2 - u_1^{-1}v}{1-q^{-2}u_1^{-1}u_2} \Theta_{i}(u_2)\bK_{\tau i} B_i(u_1)
\]
simplifies to
\beq\label{helper02}
\frac{\delta(u_2v)(v^2-1)(1-q^2)}{(1-qu_1^{-1}v)(1-q^2u_1v)}\acute{\Theta}_{i}(u_2)B_i(u_1).
\eeq
Now combining \eqref{helper01}--\eqref{helper02} and multiplying with the common factor $[2](q-q^{-1})^{-1}$, one obtains \eqref{NewSerre2}. 

The relation \eqref{NewSerre1} is obtained by applying the same argument to \cite[Lemma 4.2]{SW26}, which is the equivalent form of \eqref{SSGF2}.
\end{proof}

\subsection{Shifted affine iquantum groups}
Now we introduce shifted affine iquantum groups of quasi-split ADE type with the help of new variants of relations \eqref{BBi=taui}--\eqref{NewSerre2}.
Let $\mu$ be an integral coweight and set $\ell_i=\langle \mu,\alpha_i\rangle$.
\begin{dfn}
The shifted affine iquantum group $\Ui_\mu$ of quasi-split type is the $\bC(q)$-algebra generated by the elements $\Theta_{i,r}$, $\bK_{i}^{\pm 1}$, and $B_{i,s}$, where $i\in \I$, $r>-\ell_{\tau i}$, $s\in \bZ$, subject to the following relations, for $i,j\in \I$:
\begin{align}
&[\aTheta_i(u),\aTheta_j(v)]=0,\label{rel:HH}\\
&\aTheta_i(u)B_j(v)=\frac{(q^{c_{ij}}u-v)(q^{c_{\tau i,j}}u^{-1}-v)}{(u-q^{c_{ij}}v)(u^{-1}-q^{c_{\tau i,j}}v)}B_j(v)\aTheta_i(u),\label{rel:HB}\\
&[B_i(u),B_{\tau i}(v)]=\frac{\delta(uv)}{q-q^{-1}}\big(\aTheta_i(u)-\aTheta_{\tau i}(v)\big), \quad \text{ if }c_{i,\tau i}=0,\label{rel:BB1}\\
&(u-q^2v)B_i(u)B_i(v)+(v-q^2u)B_i(v)B_i(u)\notag\\
&\hskip 2cm =\frac{\delta(uv)(u-v)}{q^{-1}-q}\big(\aTheta_i(u)-\aTheta_i(v)\big),\qquad \text{ if }i=\tau i,\label{rel:BB2}\\
&(u-q^{-1}v)B_i(u)B_{\tau i}(v)+(v-q^{-1}u)B_{\tau i}(v)B_i(u)\notag\\
&\hskip 2cm=\frac{\delta(uv)(u-v)}{q^{2}-1}(\acute{\Theta}_i(u)-\acute{\Theta}_{\tau i}(v)),\qquad\text{if }c_{i,\tau i}=-1,\label{rel:BB3}\\
&[B_i(u),B_j(v)]=0,\qquad \text{if }c_{ij}=0, j\ne \tau i,\label{rel:BB4}\\
&(u-q^{c_{ij}}v)B_i(u)B_j(v)+(v-q^{c_{ij}}u)B_j(v)B_i(u)=0, \quad \text{ if }j\ne \tau i,\label{rel:BB5}
\end{align}
and the Serre relations: for $c_{ij}=-1$,$j\neq \tau i\neq i$,
\beq\label{rel:Serre1}
    \mathbb S_{i,j}(u_1,u_2|v) =0,
\eeq
and  for $c_{ij}=-1$, $i=\tau i$,
\beq\label{rel:Serre2}
\mathbb S_{i,j}(u_1,u_2|v)=
\dfrac{\delta(u_1u_2)(u_1-u_2)v}{(u_1-qv)(u_2-qv)}\big(\aTheta_i(u_1)-\aTheta_i(u_2)\big)B_j(v), 
\eeq
and  for $c_{i,\tau i}=-1$,
\beq\label{rel:Serre3} 
\begin{split}
\mathbb S_{i,\tau i}(u_1,u_2|v)=\Sym_{u_1,u_2}\frac{\delta(u_2v)(1+q^2)(u_2-v)v}{(qu_1-v)(v-q^2u_1^{-1})}\big(\acute{\Theta}_i(u_2)-\acute{\Theta}_{\tau i}(v)\big)B_i(u_1).
\end{split}
\eeq
where
\beq\label{rel:deg}
\aTheta_i(u)=\bK_{\tau i}\Big(u^{\ell_{\tau i}}+\sum_{r>-\ell_{\tau i}}(q-q^{-1})\Theta_{i,r}u^{-r}\Big),\qquad B_i(u)=\sum_{s\in\bZ}B_{i,s}u^{-s}.
\eeq
\end{dfn}
\begin{rem}
The presentation looks different from that for affine iquantum groups as we have done the substitution for the spectral parameter $u\mapsto u^{-1}$. 
\end{rem}

\section{$\mathrm{i}$GKLO Representations of shifted affine $\mathrm{i}$quantum groups}
\label{sec:iGKLO}

In this section, we formulate a family of iGKLO representations of shifted affine iquantum groups of arbitrary quasi-split ADE type, generalizing \cite[Section 7]{FT19} and quantizing \cite[Section~ 3]{LWW25iY}. 

\subsection{Ring of difference operators}
\label{ssec:quantumtorus}

Fix a pair of integral coweights $\lambda\gge \mu$ with $\la$ dominant such that  
\beq
\la-\mu =\sum_{i\in \I}\bv_i\alpha_i^\vee
\eeq
where $\bv_i\in\bN$ for $i\in \I$. Introduce $\theta_i\in\{0,1\}$ for $i\in \I$ subject to the constraints
\beq\label{theta-res}
\theta_i=0, \text{ for } i\in \I\setminus\I_0,\quad \text{ and } \quad c_{ij}\theta_i\theta_j=0,  \text{ for }i\ne j\in\I.
\eeq
Set $\bw_i=\langle\la,\alpha_i\rangle$, for $i\in \I$.

Let $q$ be a formal variable, and we fix a square root $q^{1/2}$ throughout the paper. Let $\bm z:=(z_{i,s})_{i\in\I, 1\lle s\lle \bw_i }$ be formal variables and denote the Laurent polynomial ring
\[
\bC(q^{1/2})[\bm z^{\pm 1}]=\bC(q^{1/2})[z_{i,s}^{\pm 1}]_{i\in\I, 1\lle s\lle \bw_i }.
\]

Consider the associative $\bC[q^{\pm {1/2}}]$-algebra $\widehat{\mathscr A}^q$ generated by $\{\partial_{i,r}^{\pm 1}, w_{i,r}^{\pm1/2}\}_{i\in \I,1\lle r\lle \bv_i}$ with the defining relations
\[
[\partial_{i,r},\partial_{j,s}]=[w_{i,r}^{1/2},w_{j,s}^{1/2}]=0,\quad \partial_{i,r}^{\pm 1}\partial_{i,r}^{\mp 1}=w_{i,r}^{\pm 1/2}w_{i,r}^{\mp 1/2}=1,\quad \partial_{i,r}w_{j,s}^{1/2}=q^{\delta_{ij}\delta_{rs}}w_{j,s}^{1/2}\partial_{i,r},
\]
for all $i,j\in \I$,  $1\lle r\lle \bv_i$, and $1\lle s\lle \bv_j$.
Denote by $\widetilde{\mathscr A}^q$ the localization of $\widehat{\mathscr A}^q$ by the multiplicative set generated by 
$$
\big\{w_{i,r}-q^mw_{i,s}^{\pm 1},w_{i,r}-q^mw_{i,r}^{-1},1-q^mw_{i,r}\big\}_{i\in \I,1\lle r\ne s\lle \bv_i,m\in \bZ}.
$$
Define the $\bC(q^{1/2})$-counterpart
\[
\mathscr A^q:=\widetilde{\mathscr A}^q\otimes_{\bC[q^{\pm {1/2}}]}\bC(q^{1/2}).
\]
Consider the larger algebra
\begin{align*}
\Ui_\mu[\bm z^{\pm 1}]:=\Ui_\mu\otimes_{\bC(q)}\bC(q^{1/2})[\bm z^{\pm 1}],\qquad 
\mathscr A^q[\bm z^{\pm 1}]:=\mathscr A^q\otimes_{\bC(q^{1/2})}\bC(q^{1/2})[\bm z^{\pm 1}],
\end{align*}
with the new central elements $z_{i,s}$.

For $i\in \I$, define
\[
W_i(u):=\prod_{r=1}^{\bv_i}w_{i,r}^{-1/2}\Big(1-\frac{w_{i,r}}{u}\Big), \qquad Z_i(u):=\prod_{r=1}^{\bw_i}\Big(1-\frac{z_{i,r}}{u}\Big),
\]
and set
\[
\W_{i}(u):=W_i(u)W_{\tau i}(u^{-1}), \quad \Z_i(u):=Z_i(u)Z_{\tau i}(u^{-1}).
\]
Clearly, we have
\beq\label{eq:invariant}
\W_{i}(u)=\W_{\tau i}(u^{-1}),\qquad \Z_{i}(u)=\Z_{\tau i}(u^{-1}).
\eeq
For $i\in \I_0$, the leading term of $\W_i(u)$ is central expanding at $u=\infty$. However, for $i\notin \I_0$, the leading term of $\W_i(u)$ is not central in general.
We also need 
\[
\W_{i,r}(u):=W_{\tau i}(u^{-1})\prod_{t=1}^{\bv_i}w_{i,t}^{-1/2}\prod_{s=1,s\ne r}^{\bv_i}\Big(1-\frac{w_{i,s}}{u}\Big),
\]

 \subsection{iGKLO representations}\label{sec:gklo}
Fix an arbitrary orientation of the diagram $\I$ such that for each $i\in \I$ with $i\ne \tau i$, if $i\to j$, then $\tau j\to \tau i$, or if $j\to i$, then $\tau i\to \tau j$. Let
\beq\label{s-def}
\wp_{i}=\begin{cases}
\hf, & \text{if }~i\leftarrow \tau i,\\
-\hf, & \text{if }~i\rightarrow \tau i,\\
0, & \text{if }~c_{i,\tau i}=0, 2.
\end{cases}
\eeq
For simplicity, set
\[
\bm\varkappa(u)=\frac{(1-qu)(1-q^{-1}u)}{(1-u)^2}.
\]
Clearly, $\bm\varkappa(u)=\bm\varkappa(u^{-1})$.

The main result of this paper is the following theorem on the iGKLO representations for shifted affine iquantum groups $\Ui_\mu$ of arbitrary quasi-split ADE types.
This can be viewed as an $\imath$-generalization of \cite[Theorem 3.1]{GKLO05b} and \cite[Theorem 7.1]{FT19} and a $q$-deformation of \cite[Theorem~3.6]{LWW25iY}. The additive factor $\pm w_{i,r}+\tfrac{m}{2}$ in \cite[Theorem~3.6]{LWW25iY} corresponds to the multiplicative factor $w_{i,r}^{\pm 1}q^m$ here. Denote by $|\wp_i|$ the absolute value of $\wp_i$.

\begin{thm} [iGKLO representations]
\label{thmq}
Given scalars $\zeta_i\in\bC(q^{1/2})^\times$ for $i\in \I$, there exists a unique $\bC(q^{1/2})[\bm z^{\pm 1}]$-algebra homomorphism
\[
\Phi_{\mu}^\la: \Ui_\mu [\bm z^{\pm 1}]\longrightarrow \mathscr A^q[\bm z^{\pm 1}],
\]
given by 
\begin{align*}
\aTheta_i(u) &\mapsto \zeta_i\zeta_{\tau i}\brown{\frac{uq^{\wp_i}-(uq^{\wp_i})^{-1}}{u-u^{-1}}}\frac{\bm\varkappa(u)^{\theta_i}\Z_i(u)}{\W_i(qu)\W_{i}(q^{-1}u)}\prod_{j\leftrightarrow i}\W_j(u), \qquad i\in \I,
\\
B_i(u)&\mapsto \frac{\brown{q^{|\wp_i|}}\zeta_i}{1-q^2}
\sum_{r=1}^{\bv_i}\delta\Big(\frac{w_{i,r}}{qu}\Big)\frac{Z_i(u)}{\W_{i,r}(w_{i,r})}\prod_{j\rightarrow i}\W_j(u)\partial_{i,r}^{-1}\\
&\hskip 0.6cm -\frac{q\zeta_i}{1-q^2} \sum_{r=1}^{\bv_{\tau i}}\delta\big(qw_{\tau i,r}u\big)\frac{Z_i(u)}{\W_{\tau i,r}(w_{\tau i,r})}\prod_{\tau j\leftarrow \tau i}\W_{\tau j}(u^{-1})\partial_{\tau  i,r},\qquad i\in \I\setminus\I_0,
\end{align*}
and 
\beq\label{rel:GKLO-split}
\begin{split}
B_i(u)&\mapsto \frac{\zeta_i}{1-q^2}
\sum_{r=1}^{\bv_i}\delta\Big(\frac{w_{i,r}}{qu}\Big)\frac{Z_i(u)}{\W_{i,r}(w_{i,r})}\prod_{j\rightarrow i}\W_j(u)\prod_{j\leftarrow i}^{j=\tau j}W_j(u^{-1})\partial_{i,r}^{-1}\\
&\hskip 0.53cm +\frac{q\zeta_i}{1-q^2} \sum_{r=1}^{\bv_i}\delta\big(qw_{i,r}u\big)\frac{\bm\varkappa(u)^{\theta_i}Z_i(u)}{\W_{i,r}(w_{i,r})}\prod_{ j\leftarrow  i}^{j\ne \tau j}\W_{ j}(u^{-1})\prod_{j\leftarrow i}^{j=\tau j}W_j(u^{-1})\partial_{\tau  i,r}\\
&\hskip 0.53cm +\delta(u)\frac{\sqrt{-1}\,\theta_i\zeta_iZ_i(1)}{(1+q)\W_{i}(q)}\prod_{j\leftrightarrow i}W_j(1),\qquad \qquad i\in \I_0,
\end{split}
\eeq
where the products are over $j\in \I$. 
\end{thm}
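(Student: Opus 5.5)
Since $\Ui_\mu$ is defined by generators and relations, uniqueness is automatic: the images of the generators $\bK_i^{\pm1}$, $\Theta_{i,r}$, $B_{i,s}$ are read off from the coefficients of the formulas, with $\bK_{\tau i}$ taken as the leading coefficient of the image of $\aTheta_i(u)$ in the expansion prescribed by \eqref{rel:deg}. For existence we must check that these images satisfy \eqref{rel:HH}--\eqref{rel:Serre3}. The first step is consistency with \eqref{rel:deg}. We expand the proposed image of $\aTheta_i(u)$ at $u=\infty$ and confirm that its leading power is $u^{\ell_{\tau i}}$, using $\ell_i=\bw_i-\sum_j c_{ji}\bv_j$. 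Its leading coefficient is a monomial in the $w$'s and $z$'s (together with $\zeta_i\zeta_{\tau i}$ and powers of $q$), which we take as the image of $\bK_{\tau i}$; this monomial should be invertible and should commute with $B_j(v)$ up to $q^{c_{\tau i,j}-c_{ij}}$. The factor $(uq^{\wp_i}-(uq^{\wp_i})^{-1})/(u-u^{-1})$ is there precisely to fix this normalization when $c_{i,\tau i}=-1$. Relation \eqref{rel:HH} is immediate, since all images of $\aTheta$ lie in the commutative subalgebra generated by the $w$'s and $z$'s.

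Next we verify \eqref{rel:HB}. Conjugating $\W_k(u)$ past $\partial_{j,r}^{\mp1}$ shifts $w_{j,r}$ by $q^{\mp2}$. On the support $\delta(w_{j,r}/qv)$ (respectively $\delta(qw_{\tau j,r}v)$) the resulting ratio of products of $\W$'s becomes a rational function of $u$ and $v$. Using \eqref{eq:invariant} and the factor $\W_i(qu)\W_i(q^{-1}u)$, this should reproduce exactly $\frac{(q^{c_{ij}}u-v)(q^{c_{\tau i,j}}u^{-1}-v)}{(u-q^{c_{ij}}v)(u^{-1}-q^{c_{\tau i,j}}v)}$. The constant term $\delta(u)$ in \eqref{rel:GKLO-split} needs separate care. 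It appears only when $\theta_i=1$, and then $\bm\varkappa(u)^{\theta_i}$ is what makes the $u=\pm1$ factors match.

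For the quadratic relations \eqref{rel:BB1}--\eqref{rel:BB5} we expand each product $B_i(u)B_j(v)$ as a double sum over $r,s$. The terms with $(i,r)\neq(j,s)$, or with non-inverse difference operators, cancel pairwise by the same mechanism as in \cite[Theorem 7.1]{FT19}: swapping the order produces exactly the factor $(u-q^{c_{ij}}v)/(v-q^{c_{ij}}u)$. The diagonal terms $\partial_{i,r}^{-1}\partial_{i,r}$ in \eqref{rel:BB1}--\eqref{rel:BB3} give sums of products $\delta(w_{i,r}/qu)\delta(uv)$ with rational coefficients. We then identify $\frac{\delta(uv)(u-v)}{\cdot}(\aTheta_i(u)-\aTheta_{\tau i}(v))$ through a partial fraction expansion of the image of $\aTheta_i(u)$ in $u$. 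Its poles at $u=q^{\pm1}w_{i,r}^{\pm1}$ have residues matching the diagonal terms. The cross terms from $\delta(u)$ together with $\partial$-terms should account for the extra poles at $u=\pm1$ coming from $\bm\varkappa$ and from the $\wp_i$-factor. We do this first in the split case and then handle $c_{i,\tau i}=0,-1$ separately.

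The main obstacle is the Serre relations \eqref{rel:Serre2} and \eqref{rel:Serre3}, and especially \eqref{rel:Serre3}, where $B_i$ and $B_{\tau i}$ share variables through both $\partial_{i,r}^{-1}$ and $\partial_{i,r}$. The plan is to expand $\mathbb S_{i,j}(u_1,u_2|v)$ into triple sums and sort terms by their total difference operator. Terms with no cancelling $\partial$-pair vanish by the standard Serre identity, as in the non-twisted GKLO proof, by symmetrizing a rational function in $u_1,u_2$. Terms with a cancelling pair, supported on $\delta(u_1u_2)$ or $\delta(u_2v)$, are simplified by combining the quadratic relations already verified with $q$-commutators of the form $[\aTheta,B]$. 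This should reduce the right-hand sides of \eqref{rel:Serre2} and \eqref{rel:Serre3} to a rational-function identity, which we check by comparing poles and residues. The contributions of the constant term $\delta(u)\theta_i$ in \eqref{rel:Serre2} require a separate computation; there the constraint $c_{ij}\theta_i\theta_j=0$ is used.
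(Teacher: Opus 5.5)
Your plan is essentially the paper's proof: decompose each $\Phi_\mu^\la(B_i(u))$ into delta-supported pieces, cancel the off-diagonal products using $q$-commutation identities (Lemmas~\ref{chirelq} and~\ref{lem:qs}), and match the surviving scalar terms with residues of $\Phi_\mu^\la(\aTheta_i(u))$ via Lemma~\ref{exp}; the Serre relations are handled the same way once the terms without a cancelling pair drop out. Your side remarks on the $\imath$-specific factors are inaccurate, though: $\bm\varkappa$ plays no role in \eqref{rel:HB}, where the $\delta(v)$-term is harmless simply because the structure function equals $1$ at $v=1$; in \eqref{rel:BB2} the cross terms of the constant-term piece $\chi_{i,0}^+$ with the $\partial$-terms cancel, and it is the scalar term $(u+v)(1-q^2)\delta(u)\delta(v)(\chi_{i,0}^+)^2$ that matches the single pole at $u=1$ of $(u-u^{-1})\bm\varkappa(u)$; and the poles of the $\wp_i$-factor at $u=\pm1$ are just cancelled by $(u-u^{-1})$ (note $\theta_i=0$ whenever $\wp_i\neq 0$), so its real role is not normalization but producing $uq^{\wp_i}-u^{-1}q^{-\wp_i}$, which is needed for the residues to match in \eqref{rel:BB3}.
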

Note that the images of $\bK_i$ under $\Phi_{\mu}^\la$ are specified by the images of $\aTheta_i(u)$; see \eqref{rel:deg}.
Theorem~ \ref{thmq} is proved in Section \ref{sec:pf-split} for split type and Section \ref{sec:pf-quasi-split} for non-split type below.

\begin{rem}
We have the following correspondence between extra factors from here to those in \cite[Theorem 3.6]{LWW25iY}:
\[
\frac{(1-qu)(1-q^{-1}u)}{(1-u)^2}\mapsto\frac{(u+\hf)(u-\hf)}{u^2},  \qquad \frac{uq^{\wp_i}-(uq^{\wp_i})^{-1}}{u-u^{-1}}\mapsto 1+\frac{\wp_i}{4u}.
\]
Note that $\wp_i$ from \cite{LWW25iY} is in $\{\pm 1\}$ rather than in $\{\pm\hf\}$ here.
\end{rem}

\section{Proof of Theorem \ref{thmq} for the split type}
\label{sec:pf-split}

In this section, we prove Theorem \ref{thmq} for split type. In this case all $\wp_i=0$ and thus the colored terms in the formulas of the theorem can be dropped. 

\subsection{Relations \eqref{rel:HH} and \eqref{rel:BB4}}

Clearly, if we expand the rational function $\Phi_\mu^\la(\aTheta_i(u))$ at $u=\infty$, it has the desired order of pole at $u=\infty$ as in \eqref{rel:deg}. We need to verify that the relations \eqref{rel:HH}, \eqref{rel:HB}, \eqref{rel:BB2}, \eqref{rel:BB4}, \eqref{rel:BB5} and \eqref{rel:Serre2} are preserved by the map $\Phi_\mu^\la$. The relations \eqref{rel:HH} and \eqref{rel:BB4} are clear. 

We shall verify the remaining relations \eqref{rel:HB}, \eqref{rel:BB2}, \eqref{rel:BB5} and \eqref{rel:Serre2}, respectively.

\subsection{Relation \eqref{rel:HB}}
We need to verify that
\beq\label{q2pf}
\Phi_\mu^\la\big(\aTheta_i(u)B_j(v)\big)=\frac{(q^{c_{ij}}u-v)(q^{c_{ij}}u^{-1}-v)}{(u-q^{c_{ij}}v)(u^{-1}-q^{c_{ij}}v)}\Phi_\mu^\la\big(B_j(v)\aTheta_i(u)\big).
\eeq
Note that $\Phi_\mu^\la\big(\aTheta_i(u)\big)$ acts by scalar multiplication while $\Phi_\mu^\la\big(B_j(v)\big)$ is a linear combination of difference operators $\partial_{j,r}^{\pm 1}$ plus $\delta(v)$ times a scalar rational function. Since
\[
\frac{(q^{c_{ij}}u-v)(q^{c_{ij}}u^{-1}-v)}{(u-q^{c_{ij}}v)(u^{-1}-q^{c_{ij}}v)}\bigg|_{v=1}=1,
\] 
the terms not involving difference operators from both sides of \eqref{q2pf} are clearly equal. Then we consider terms containing difference operators. We shall always move $\partial_{j,r}^{\pm 1}$ to the right and then compare their coefficients. There are 3 cases.
\subsubsection{The case $c_{ij}=0$}
This case is clear.
\subsubsection{The case $c_{ij}=2$} 
We have $i=j$. Let us consider the coefficients of $\partial_{i,r}^{-1}$ in both sides of \eqref{q2pf}. There will be a lot of common factors and hence we only need to focus on the parts that differ for both sides. In this case, the difference comes from the part of $\Phi_\mu^\la(\aTheta_i(u))$ that $\partial_{i,r}^{-1}$ acts non-trivially. Thus it must come from $\W_i(q^{-1}u)\W_i(qu)$. It eventually reduces to showing that
\[
\Big(\Big(1-\frac{w_{i,r}}{qu}\Big) \Big(1-qw_{i,r}u\Big)\Big(1-\frac{qw_{i,r}}{u}\Big) \Big(1-q^{-1}w_{i,r}u\Big)\Big)^{-1}\delta\Big(\frac{w_{i,r}}{qv}\Big)
\]
from $\Phi_\mu^\la\big(\aTheta_i(u)B_j(v)\big)$ is equal to $\dfrac{(q^{2}u-v)(q^2u^{-1}-v)}{(u-q^2v)(u^{-1}-q^2v)}$ times
$$
\brown{q^{-4}}\Big(\Big(1-\frac{w_{i,r}}{q^3u}\Big) \Big(1-q^{-1}w_{i,r}u\Big)\Big(1-\frac{w_{i,r}}{qu}\Big) \Big(1-q^{-3}w_{i,r}u\Big)\Big)^{-1}\delta\Big(\frac{w_{i,r}}{qv}\Big),
$$
from $\Phi_\mu^\la\big(B_j(v)\aTheta_i(u)\big)$. Here $q^{-4}$ comes from moving $\partial_{i,r}^{-1}$ through $w_{i,r}^2$. By the first equality in \eqref{delta}, we can replace $w_{i,r}$ with $qv$. Hence it amounts to proving that
\begin{align*}
\bigg(&\Big(1-\frac{v}{u}\Big) \Big(1-q^2uv\Big)\Big(1-\frac{q^2v}{u}\Big) \Big(1-uv\Big)\bigg)^{-1}\\
&=\brown{q^{-4}}\frac{(q^{2}u-v)(q^2u^{-1}-v)}{(u-q^2v)(u^{-1}-q^2v)}\bigg(\Big(1-\frac{q^{-2}v}{u}\Big) \Big(1-uv\Big)\Big(1-\frac{v}{u}\Big) \Big(1-q^{-2}uv\Big)\bigg)^{-1},
\end{align*}
which clearly holds.

Similarly, to compare the coefficients of $\partial_{i,r}$, we need to prove that
$$
\bigg(\Big(1-\frac{w_{i,r}}{qu}\Big) \Big(1-qw_{i,r}u\Big)\Big(1-\frac{qw_{i,r}}{u}\Big) \Big(1-q^{-1}w_{i,r}u\Big)\bigg)^{-1}\delta\big(qw_{i,r}v\big)
$$ 
from $\Phi_\mu^\la\big(\aTheta_i(u)B_j(v)\big)$ is equal to $\dfrac{(q^{2}u-v)(q^2u^{-1}-v)}{(u-q^2v)(u^{-1}-q^2v)}$ 
times
$$
\brown{q^{4}}\bigg(\Big(1-\frac{qw_{i,r}}{u}\Big) \Big(1-q^3w_{i,r}u\Big)\Big(1-\frac{q^3w_{i,r}}{u}\Big) \Big(1-qw_{i,r}u\Big)\bigg)^{-1}\delta\big(qw_{i,r}v\big),
$$ 
from $\Phi_\mu^\la\big(B_j(v)\aTheta_i(u)\big)$. Here $q^{4}$ comes from moving $\partial_{i,r}$ through $w_{i,r}^2$. By the first equality in \eqref{delta}, we can replace $w_{i,r}$ with $q^{-1}v^{-1}$. Hence it amounts to proving that
\begin{align*}
\bigg(&\Big(1-\frac{1}{q^2uv}\Big) \Big(1-\frac{u}{v}\Big)\Big(1-\frac{1}{uv}\Big) \Big(1-\frac{u}{q^2v}\Big)\bigg)^{-1}\\
&=\brown{q^{4}}\frac{(q^{2}u-v)(q^2u^{-1}-v)}{(u-q^2v)(u^{-1}-q^2v)}\bigg(\Big(1-\frac{1}{uv}\Big) \Big(1-\frac{q^2u}{v}\Big)\Big(1-\frac{q^2}{uv}\Big) \Big(1-\frac{u}{v}\Big)\bigg)^{-1},
\end{align*}
which clearly holds.

\subsubsection{The case $c_{ij}=-1$} 
Let us consider the coefficients of $\partial_{j,r}^{-1}$ in both sides of \eqref{q2pf}. In this case, the difference comes from $\W_j(u)$. Thus it  reduces to showing that
\begin{align*}
&\Big(1-\frac{w_{j,r}}{u}\Big) \Big(1-w_{j,r}u\Big)\delta\Big(\frac{w_{j,r}}{qv}\Big)\\
&=\brown{q^{2}}\frac{(q^{-1}u-v)(q^{-1}u^{-1}-v)}{(u-q^{-1}v)(u^{-1}-q^{-1}v)}\Big(1-\frac{q^{-2}w_{j,r}}{u}\Big) \Big(1-q^{-2}w_{j,r}u\Big)\delta\Big(\frac{w_{j,r}}{qv}\Big),
\end{align*}
where $q^{2}$ comes from moving $\partial_{j,r}^{-1}$ through $w_{j,r}^{-1}$. By the first equality in \eqref{delta}, we can replace $w_{j,r}$ with $qv$. Hence it amounts to verifying that
\begin{align*}
\Big(1-\frac{qv}{u}\Big) \Big(1-quv\Big)=\brown{q^{2}}\frac{(q^{-1}u-v)(q^{-1}u^{-1}-v)}{(u-q^{-1}v)(u^{-1}-q^{-1}v)}\Big(1-\frac{v}{qu}\Big) \Big(1-\frac{uv}{q}\Big),
\end{align*}
which clearly holds. The case for coefficients of $\partial_{j,r}$ is similar and we omit the details.

\subsection{Relation \eqref{rel:BB5}}\label{ssec:BB5}
To simplify the next task, following \cite{GKLO,LWW25iY} we set $w_{i,0}=q$ and
introduce
\begin{align*}
\chi_{i,0}^+=& \frac{\sqrt{-1}\,\theta_i\zeta_iZ_i(1)}{(1+q)\W_{i}(q)}\prod_{j\leftrightarrow i}W_j(1) ,\\
\chi_{i,r}^+=&\frac{\zeta_i}{1-q^2}\frac{Z_i(w_{i,r}/q)}{\W_{i,r}(w_{i,r})}
\prod_{j\rightarrow i}\W_j(w_{i,r}/q)\prod_{j\leftarrow i}W_j(q/w_{i,r})\partial_{i,r}^{-1},\\
\chi_{i,r}^-=&\frac{q\zeta_i}{1-q^2}
\frac{\bm \varkappa(qw_{i,r})^{\theta_i} Z_i(w_{i,r}^{-1}/q)}{\W_{i,r}(w_{i,r})}\prod_{j\leftarrow i}W_j(qw_{i,r})\partial_{i,r},
\end{align*}
for $i\in \I$ and $1\lle r\lle \bv_i$. Note that $\chi_{i,0}^+=0$ when $\theta_i=0$. Then we have
\[
B_i(u)=\sum_{r=0}^{\bv_i} \delta\Big(\frac{w_{i,r}}{qu}\Big)\chi_{i,r}^+ + \sum_{r=1}^{\bv_i}\delta\big( qw_{i,r}u\big)\chi_{i,r}^- .
\]

Now we proceed to prove the relation \eqref{rel:BB5} for the case $c_{ij}=-1$ (as the case for $c_{ij}= 0$ is implied by the easy relation \eqref{rel:BB4}). Note that $\theta_i\theta_j=0$ by \eqref{theta-res}. Without loss of generality, we assume that $\theta_j=0$.
\begin{lem}\label{chirelq}
We have
\begin{align*}
(w_{i,r}^{\pm 1}-q^2w_{i,s}^{\pm 1})\chi_{i,r}^\pm\chi_{i,s}^\pm&=( q^2w_{i,r}^{\pm 1}- w_{i,s}^{\pm 1})\chi_{i,s}^\pm\chi_{i,r}^\pm,\qquad \text{if }r\ne s,\\
(w_{i,r}^{\pm 1}-q^2w_{i,s}^{\mp 1})\chi_{i,r}^\pm\chi_{i,s}^\mp&=( q^2w_{i,r}^{\pm 1}- w_{i,s}^{\mp 1})\chi_{i,s}^\mp\chi_{i,r}^\pm,\qquad \text{if }r\ne s,\\
(w_{i,r}^{\pm 1}-q^{c_{ij}}w_{j,s}^{\pm 1})\chi_{i,r}^\pm\chi_{j,s}^\pm&=( q^{c_{ij}}w_{i,r}^{\pm 1}- w_{j,s}^{\pm 1})\chi_{j,s}^\pm\chi_{i,r}^\pm,\\
(w_{i,r}^{\pm 1}-q^{c_{ij}}w_{j,s}^{\mp 1})\chi_{i,r}^\pm\chi_{j,s}^\mp&=( q^{c_{ij}}w_{i,r}^{\pm 1}- w_{j,s}^{\mp 1})\chi_{j,s}^\mp\chi_{i,r}^\pm,
\end{align*}  
where $r=0$ is allowed in $\chi_{i,r}^+$.
\end{lem}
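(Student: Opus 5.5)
Each $\chi^\pm_{i,r}$ has the form (rational function in the $w$'s) $\cdot\,\partial_{i,r}^{\mp1}$, and $\chi^+_{i,0}$ is a pure scalar. The plan is therefore to move every difference operator to the right in both products and compare the resulting coefficients. The relation $\partial_{i,r}w_{j,s}^{1/2}=q^{\delta_{ij}\delta_{rs}}w_{j,s}^{1/2}\partial_{i,r}$ gives $\partial_{i,r}^{\mp1}f(w_{i,r})=f(q^{\mp2}w_{i,r})\partial_{i,r}^{\mp1}$. Write $\chi^\pm_{i,r}=F^\pm_{i,r}\partial_{i,r}^{\mp1}$. Then
\[
\chi^\pm_{i,r}\chi^{\epsilon}_{j,s}=F^\pm_{i,r}\,\big(\partial_{i,r}^{\mp1}F^{\epsilon}_{j,s}\partial_{i,r}^{\pm1}\big)\,\partial_{i,r}^{\mp1}\partial_{j,s}^{-\epsilon1},
\]
and similarly for the reversed product. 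Since the operator parts commute, each identity reduces to
\[
\frac{F^{\epsilon}_{j,s}\big|_{w_{i,r}\mapsto q^{\mp2}w_{i,r}}}{F^{\epsilon}_{j,s}}\cdot\frac{F^\pm_{i,r}}{F^\pm_{i,r}\big|_{w_{j,s}\mapsto q^{-2\epsilon}w_{j,s}}}=\frac{q^{c}w_{i,r}^{\pm1}-w_{j,s}^{\epsilon}}{w_{i,r}^{\pm1}-q^{c}w_{j,s}^{\epsilon}},
\]
where $c=2$ or $c_{ij}$ as appropriate. The left-hand side only involves the factors of $F^\epsilon_{j,s}$ depending on $w_{i,r}$ and the factors of $F^\pm_{i,r}$ depending on $w_{j,s}$, which makes this a finite bookkeeping computation.

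\textbf{Same node, $r\ne s$.} For $i=j$, the only cross-dependence sits in $\W_{i,r}(w_{i,r})^{-1}$. This contains $\prod_{t\ne r}(1-w_{i,t}/w_{i,r})^{-1}$ from $W_i$, and $\prod_t(1-w_{i,t}w_{i,r})^{-1}$ from $W_{\tau i}(u^{-1})=W_i(u^{-1})$ evaluated at $u=w_{i,r}$, together with the half-powers $w_{i,t}^{-1/2}$ occurring twice (giving $w_{i,t}^{-1}$ total). The $\bm\varkappa$-factor and the $Z_i$ factor depend only on $w_{i,r}$, so they cancel in the ratio. I would first verify the $(+,+)$ case. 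The factor $(1-w_{i,s}/w_{i,r})(1-w_{i,r}/w_{i,s})$ and its shift produce the standard GKLO ratio, exactly as in \cite{GKLO}, and the $w_{i,s}w_{i,r}$ factors together with the half-powers must cancel. The sign-flip cases $(\pm,\mp)$ and $(-,-)$ then follow from the same computation via the substitution $w_{i,s}\mapsto w_{i,s}^{-1}$. This reflects the symmetry $\W_i(u)=\W_i(u^{-1})$ of \eqref{eq:invariant}, under which the two families of factors are exchanged. The case $r=0$ involves only the scalar $\chi^+_{i,0}$ with $w_{i,0}=q$. This case arises only when $\theta_i=1$, and then only the factor $\W_i(q)^{-1}$ depends on $w_{i,s}$. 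Shifting $w_{i,s}$ by $q^{\mp2}$ in $(1-w_{i,s}/q)(1-qw_{i,s})$, with the half-powers included, should reproduce the ratio with $w_{i,0}=q$.

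\textbf{Different nodes, $c_{ij}=-1$.} Here the dependence of $F_{i,r}$ on $w_{j,s}$ comes from the product $\prod_{j\to i}\W_j(w_{i,r}/q)$ or from $\prod_{j\leftarrow i}W_j(q/w_{i,r})$, $W_j(qw_{i,r})$, according to orientation, and symmetrically for $F_{j,s}$. Since exactly one of $i\to j$ and $j\to i$ holds, exactly one of the two ratios is nontrivial. For each of the four sign combinations and each of the two orientations, I will check that the one surviving linear factor, together with the shift of the half-power, gives $(q^{-1}w_{i,r}^{\pm1}-w_{j,s}^{\epsilon})/(w_{i,r}^{\pm1}-q^{-1}w_{j,s}^{\epsilon})$. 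When $\theta_i=1$, the case $r=0$ is again a scalar calculation with $W_j(1)$. For $c_{ij}=0$ there is no cross-dependence and both sides trivially agree.

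\textbf{Main obstacle.} I expect the main difficulty to be the asymmetric split in the split-type formula: $\W_j$ versus $W_j$ for $j\leftarrow i$ and $j\to i$, the $W_j(u^{-1})$ appearing in $\chi^+$ but $W_j(qw)$ in $\chi^-$. Because of this, verifying the mixed-sign cases $(\pm,\mp)$ requires careful tracking of which factor contributes and of the powers of $q$ coming from the $w^{-1/2}$ prefactors. If a mismatch by a power of $q$ appears, I would recheck that the half-power prefactors in $W_j$ absorb it, since this is precisely why they are included.
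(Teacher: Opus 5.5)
Your strategy is exactly the ``direct computation'' the paper invokes: write $\chi^\pm_{i,r}=F^\pm_{i,r}\partial_{i,r}^{\mp1}$, move the difference operators to the right, and compare rational coefficients. Your same-node analysis checks out, including the case $r=0$. So does your reduction of the mixed-sign cases via $w_{i,s}\mapsto w_{i,s}^{-1}$, which is legitimate because the cross-factor $w^{-1}(1-w/u)(1-wu)$ is invariant under $w\mapsto w^{-1}$.

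However, your different-node paragraph contains a false step: it is not true that exactly one of the two ratios is nontrivial. The orientation only decides which node carries the two-sided factor $\W$; the two nodes always see each other. If $j\to i$, then $F^+_{i,r}$ contains $\W_j(w_{i,r}/q)=W_j(w_{i,r}/q)W_j(q/w_{i,r})$. Meanwhile $F^+_{j,s}$ contains $W_i(q/w_{j,s})$ and $F^-_{j,s}$ contains $W_i(qw_{j,s})$, because $i$ occurs in $\prod_{k\leftarrow j}$. Only $F^-_{i,r}$ is independent of $w_{j,s}$. Hence both ratios contribute for $\chi^+_{i,r}\chi^{\epsilon}_{j,s}$ when $j\to i$, and symmetrically for $\chi^{\pm}_{i,r}\chi^+_{j,s}$ when $i\to j$. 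Neither ratio alone gives the target, so checking ``the one surviving linear factor'' would produce a spurious mismatch in half of the sign/orientation cases.

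For example, take $a=w_{i,r}$, $b=w_{j,s}$, $j\to i$ and signs $(+,+)$. The two ratios are
\[
\frac{q(1-q^{-3}ab)}{1-q^{-1}ab}\qquad\text{and}\qquad\frac{(1-qb/a)(1-q^{-1}ab)}{q^{2}(1-q^{-1}b/a)(1-q^{-3}ab)},
\]
and their product is $(a-qb)/(qa-b)=(q^{-1}a-b)/(a-q^{-1}b)$, as required. For signs $(+,-)$ it is instead the $a/b$-type factors that cancel, up to a power of $q$. What survives there is $q(1-q^{-1}ab)/(1-qab)=(q^{-1}a-b^{-1})/(a-q^{-1}b^{-1})$.

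So the correct bookkeeping is as follows. One linear factor of $\W_j$ in $F^+_{i,r}$ cancels against the $W_i$-factor of $F^\epsilon_{j,s}$. The other linear factor, together with the half-power shifts, produces the required ratio. Only when $\chi^-$ of the node at the head of the arrow is involved is there a single nontrivial ratio, as you describe. With this correction the computation goes through in all cases.
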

\begin{proof}
Follows from a direct computation.
\end{proof}
The relation \eqref{rel:BB5} can be equivalently written as
\beq\label{helper03}
(u-q^{c_{ij}}v)B_i(u)B_j(v)=(q^{c_{ij}}u-v)B_j(v)B_i(u).
\eeq
The image of the LHS of \eqref{helper03} under $\Phi_\mu^\la$ is given by
\begin{align*}
    \Phi_{\mu}^\la\big((u-q^{c_{ij}}v&)B_i(u)B_j(v)\big)\\
    = q^{-1}\sum_{r=0}^{\bv_i}\sum_{s=1}^{\bv_j}\bigg(&\delta\Big(\frac{w_{i,r}}{qu}\Big)\delta\Big(\frac{w_{j,s}}{qv}\Big)(w_{i,r}-q^{c_{ij}}w_{j,s})\chi_{i,r}^+\chi_{j,s}^+\\
    +\,&\delta\Big(\frac{w_{i,r}}{qu}\Big)\delta\big(qw_{j,s}v\big)(w_{i,r}-q^{c_{ij}}w_{j,s}^{-1})\chi_{i,r}^+\chi_{j,s}^-\bigg)\\    +q^{-1}\sum_{r=1}^{\bv_i}\sum_{s=1}^{\bv_j}\bigg(&\delta\big(qw_{i,r}u\big)\delta\Big(\frac{w_{j,s}}{qv}\Big)(w_{i,r}^{-1}-q^{c_{ij}}w_{j,s})\chi_{i,r}^-\chi_{j,s}^+\\    +\,&\delta\big(qw_{i,r}u\big)\delta\big(qw_{j,s}v\big)(w_{i,r}^{-1}-q^{c_{ij}}w_{j,s}^{-1})\chi_{i,r}^-\chi_{j,s}^-
    \bigg),
\end{align*}
while the RHS of \eqref{helper03} under $\Phi_\mu^\la$ is given by
\begin{align*}
    \Phi_{\mu}^\la\big((q^{c_{ij}}u-v&)B_j(v)B_i(u)\big)\\
    = q^{-1}\sum_{r=0}^{\bv_i}\sum_{s=1}^{\bv_j}\bigg(&\delta\Big(\frac{w_{i,r}}{qu}\Big)\delta\Big(\frac{w_{j,s}}{qv}\Big)(q^{c_{ij}}w_{i,r}-w_{j,s})\chi_{j,s}^+\chi_{i,r}^+\\
    +\,&\delta\Big(\frac{w_{i,r}}{qu}\Big)\delta\big(qw_{j,s}v\big)(q^{c_{ij}}w_{i,r}-w_{j,s}^{-1})\chi_{j,s}^-\chi_{i,r}^+\bigg)\\    +q^{-1}\sum_{r=1}^{\bv_i}\sum_{s=1}^{\bv_j}\bigg(&\delta\big(qw_{i,r}u\big)\delta\Big(\frac{w_{j,s}}{qv}\Big)(q^{c_{ij}}w_{i,r}^{-1}-w_{j,s})\chi_{j,s}^+\chi_{i,r}^-\\    +\,&\delta\big(qw_{i,r}u\big)\delta\big(qw_{j,s}v\big)(q^{c_{ij}}w_{i,r}^{-1}-w_{j,s}^{-1})\chi_{j,s}^-\chi_{i,r}^-
    \bigg).
\end{align*}
It follows from Lemma \ref{chirelq} that
\[
\Phi_{\mu}^\la\big((u-q^{c_{ij}}v)B_i(u)B_j(v)\big)=\Phi_{\mu}^\la\big((q^{c_{ij}}u-v)B_j(v)B_i(u)\big),
\]
whence the relation \eqref{rel:BB5} (also including the relation \eqref{rel:BB4} as a special case).

\subsection{Relation \eqref{rel:BB2}} \label{sec:relBB2}

We start with computing $\Phi_\mu^\la\big((u-q^2v)B_i(u)B_i(v)\big)$, which is given by
\begin{align}
&\sum_{r=1}^{\bv_i}(u-q^2v)\delta\Big(\frac{w_{i,r}}{qu}\Big)\delta\Big(\frac{w_{i,r}v}{q}\Big)\chi_{i,r}^+\chi_{i,r}^- + \sum_{r=1}^{\bv_i}(u-q^2v)\delta\big(qw_{i,r}u\big)\delta\Big(\frac{qw_{i,r}}{v}\Big)\chi_{i,r}^-\chi_{i,r}^+ \nonumber\\
&+\sum_{r=1}^{\bv_i}(u-q^2v)\delta\Big(\frac{w_{i,r}}{qu}\Big)\delta\Big(\frac{w_{i,r}}{q^3v}\Big)\chi_{i,r}^+\chi_{i,r}^+   
+ \sum_{0\lle r\ne s\lle \bv_i}(u-q^2v)\delta\Big(\frac{w_{i,r}}{qu}\Big)\delta\Big(\frac{w_{i,s}}{qv}\Big)\chi_{i,r}^+\chi_{i,s}^+ \label{q5pf1}\\
&+\sum_{\substack{r=0,s=1\\ r \ne s}}^{\bv_i}(u-q^2v)\delta\Big(\frac{w_{i,r}}{qu}\Big)\delta\big(qw_{i,s}v\big)\chi_{i,r}^+\chi_{i,s}^-   
+ \sum_{\substack{r=1,s=0\\ r \ne s}}^{\bv_i}(u-q^2v)\delta\big(qw_{i,r}u\big)\delta\Big(\frac{w_{i,s}}{qv}\Big)\chi_{i,r}^-\chi_{i,s}^+ \label{q5pf0}\\
&+\sum_{r=1}^{\bv_i}(u-q^2v)\delta\big(qw_{i,r}u\big)\delta\big(q^3w_{i,r}v\big)\chi_{i,r}^-\chi_{i,r}^-   
\label{q5pf2}\\
&+ \sum_{1\lle r\ne s\lle \bv_i}(u-q^2v)\delta\big(qw_{i,r}u\big)\delta\big(qw_{i,s}v\big)\chi_{i,r}^-\chi_{i,s}^-+(u-q^2v) \delta(u)\delta(v)\chi_{i,0}^+\chi_{i,0}^+.\label{q5pf22}
\end{align}
By the identities in \eqref{delta}, the first sums in \eqref{q5pf1} and \eqref{q5pf2} are zero. We also obtain a similar equality for $\Phi_\mu^\la\big((v-q^2u)B_i(v)B_i(u)\big)$.

It follows from \eqref{delta} and Lemma \ref{chirelq} that
\begin{align*}
&\sum_{0\lle r\ne s\lle \bv_i}(u-q^2v)\delta\Big(\frac{w_{i,r}}{qu}\Big)\delta\Big(\frac{w_{i,s}}{qv}\Big)\chi_{i,r}^+\chi_{i,s}^+ + \sum_{0\lle r\ne s\lle \bv_i}(v-q^2u)\delta\Big(\frac{w_{i,s}}{qv}\Big)\delta\Big(\frac{w_{i,r}}{qu}\Big)\chi_{i,s}^+\chi_{i,r}^+\\
=&q^{-1}\sum_{0\lle r\ne s\lle \bv_i}\bigg((w_{i,r}-q^2w_{i,s})\delta\Big(\frac{w_{i,r}}{qu}\Big)\delta\Big(\frac{w_{i,s}}{qv}\Big)\chi_{i,r}^+\chi_{i,s}^+\\
&\hskip5.7cm + (w_{i,s}-q^2w_{i,r})\delta\Big(\frac{w_{i,r}}{qu}\Big)\delta\Big(\frac{w_{i,s}}{qv}\Big)\chi_{i,s}^+\chi_{i,r}^+\bigg)\\
=&q^{-1}\sum_{0\lle r\ne s\lle \bv_i}\delta\Big(\frac{w_{i,r}}{qu}\Big)\delta\Big(\frac{w_{i,s}}{qv}\Big)\Big((w_{i,r}-q^2w_{i,s})\chi_{i,r}^+\chi_{i,s}^+
+ (w_{i,s}-q^2w_{i,r})\chi_{i,s}^+\chi_{i,r}^+\Big)=0.
\end{align*}
Similar cancellations also occur for other sums from \eqref{q5pf1}--\eqref{q5pf22}. Thus we find that
\beq\label{topf0}
\begin{split}
&\Phi_\mu^\la\big((u-q^2v)B_i(u)B_i(v)+(v-q^2u)B_i(v)B_i(u)\big)\\
=&\,(u+v)(1-q^2)\delta(u)\delta(v)\chi_{i,0}^+\chi_{i,0}^+\\
&+\sum_{r=1}^{\bv_i}(u-q^2v)\delta\Big(\frac{w_{i,r}}{qu}\Big)\delta\Big(\frac{w_{i,r}v}{q}\Big)\chi_{i,r}^+\chi_{i,r}^- + \sum_{r=1}^{\bv_i}(u-q^2v)\delta\big(qw_{i,r}u\big)\delta\Big(\frac{qw_{i,r}}{v}\Big)\chi_{i,r}^-\chi_{i,r}^+\\
&+\sum_{r=1}^{\bv_i}(v-q^2u)\delta\Big(\frac{w_{i,r}}{qv}\Big)\delta\Big(\frac{w_{i,r}u}{q}\Big)\chi_{i,r}^+\chi_{i,r}^- + \sum_{r=1}^{\bv_i}(v-q^2u)\delta\big(qw_{i,r}v\big)\delta\Big(\frac{qw_{i,r}}{u}\Big)\chi_{i,r}^-\chi_{i,r}^+.
\end{split}
\eeq

To evaluate the RHS, we need the following standard statement, see e.g. \cite[Lemma C.1]{FT19}. For any rational function $\gamma(u)$, denote by $\gamma(u)^+$ and $\gamma(u)^-$ the expansions of $\gamma(u)$ at $u=\infty$ and $u=0$, respectively. 

\begin{lem}\label{exp}
For any rational function $\gamma(u)$ with simple poles $\{a_k\}\subset \bC^\times$ and possibly poles of higher order at $u=0,\infty$, we have
\[
\gamma(u)^+-\gamma(u)^-=\sum_{k}\delta\Big(\frac{a_k}{u}\Big)\mathrm{Res}_{u=a_k}\frac{1}{u}\gamma(u).
\]
\end{lem}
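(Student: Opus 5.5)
The plan is to reduce the statement to the single-pole case by linearity and then compare Laurent coefficients directly. First, use partial fractions: since $\gamma$ is rational with simple poles at the points $a_k\in\bC^\times$ and possibly higher-order poles at $0$ and $\infty$, write
\[
\gamma(u)=P(u)+\sum_k \frac{c_k}{u-a_k},\qquad P(u)\in\bC(q^{1/2})[u,u^{-1}],
\]
where $c_k=\mathrm{Res}_{u=a_k}\gamma(u)$. A Laurent polynomial $P(u)$ has the same expansion at $u=\infty$ and at $u=0$, so it contributes nothing to $\gamma^+-\gamma^-$. Both sides of the claimed identity are linear in $\gamma$, so it suffices to treat $\gamma(u)=\frac{1}{u-a}$ with $a\in\bC^\times$.

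For this single term, compute the two expansions explicitly. At $u=\infty$,
\[
\frac{1}{u-a}=\sum_{n\gge 0}a^n u^{-n-1}=a^{-1}\sum_{m\gge 1}\Big(\frac{a}{u}\Big)^m .
\]
At $u=0$,
\[
\frac{1}{u-a}=-a^{-1}\sum_{m\gge 0}\Big(\frac{u}{a}\Big)^m=-a^{-1}\sum_{m\lle 0}\Big(\frac{a}{u}\Big)^m .
\]
Subtracting the second from the first gives
\[
\gamma^+-\gamma^-=a^{-1}\sum_{m\in\bZ}\Big(\frac{a}{u}\Big)^m=a^{-1}\delta\Big(\frac{a}{u}\Big).
\]
On the other hand, $\mathrm{Res}_{u=a}\frac{1}{u}\cdot\frac{1}{u-a}=a^{-1}$, so the two sides agree.

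For general $\gamma$, note that the factor $1/u$ is regular at each $a_k\neq 0$. Hence
\[
\mathrm{Res}_{u=a_k}\frac{\gamma(u)}{u}=\frac{c_k}{a_k},
\]
which is exactly the coefficient of $\delta(a_k/u)$ produced by the single-pole computation. Summing over $k$ gives the lemma.

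There is no real obstacle here. The one point to check is that the higher-order poles at $0$ and $\infty$ are fully absorbed into the Laurent polynomial part $P(u)$. This holds because after subtracting the principal parts at the $a_k$, the remainder $\gamma-\sum_k c_k/(u-a_k)$ is a rational function whose only possible poles are at $0$ and $\infty$, and such a function is a Laurent polynomial. The same argument works verbatim when the coefficients lie in a commutative localization such as the subalgebra generated by the $w_{i,r}$, where it will be applied later.
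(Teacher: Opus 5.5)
Your proof is correct and is the standard one. By linearity, and because a Laurent polynomial has the same expansion at $u=0$ and $u=\infty$, everything reduces to $\gamma(u)=1/(u-a)$, where the two geometric series differ by exactly $a^{-1}\delta(a/u)$. The paper does not reprove the lemma; it quotes it as a standard fact from \cite[Lemma C.1]{FT19}.

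Your closing remark about non-scalar coefficients holds with one proviso, so ``verbatim'' is slightly too strong. The partial-fraction step needs each $a_k$ and each difference $a_k-a_l$ to be invertible in the coefficient ring. Alternatively, you can work in the fraction field of the commutative subalgebra generated by the $w_{i,r}^{\pm 1/2}$, which is a domain.
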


The rational function 
$$
\Xi_i(u):=\Phi_\mu^\la(\aTheta_i(u))=\zeta_i^2 \frac{\bm\varkappa(u)^{\theta_i}\Z_i(u)}{\W_i(qu)\W_{i}(q^{-1}u)}\prod_{j\leftrightarrow i}\W_j(u),$$ 
clearly has the property $\Xi_i(u)=\Xi_i(u^{-1})$.

Note that $\aTheta_i(u)$ is a formal series in $u^{-1}$ and $\delta(uv)\aTheta_i(v)=\delta(uv)\aTheta_i(u^{-1})$. Thus we have
$$\Phi_\mu^\la\big(\delta(uv)(u-v)(\aTheta_i(u)-\aTheta_i(v))\big)=\delta(uv)(u-u^{-1})\big(\Xi_i(u)^+-\Xi_i(u)^-\big).
$$
Since $(u-u^{-1})\Xi_i(u)$ has at most a simple pole at $u=1$, by Lemma \ref{exp}, the image of the RHS of \eqref{rel:BB2} under $\Phi_\mu^\la$ is given by
\beq\label{topf1}
\begin{split}
 \frac{\zeta_i^2 }{q^{-1}-q}&\delta(uv)\frac{-2\theta_i(1-q)^2\Z_i(1)\delta(u)}{q\W_i(q)^2}\prod_{j\leftrightarrow i}\W_j(1)+\sum_{r=1}^{\bv_i}\frac{\zeta_i^2(u-v)}{q^{-1}-q}\delta(uv)\times \\
 &\bigg(\frac{1}{1-q^2} \frac{\bm \varkappa(q^{-1}w_{i,r})^{\theta_i}\Z_i(q^{-1}w_{i,r})}{\W_{i,r}(w_{i,r})\W_{i,r}(q^{-2}w_{i,r})}\prod_{j\leftrightarrow i}\W_j(q^{-1}w_{i,r})\Big(\delta\Big(\frac{w_{i,r}}{qu}\Big)-\delta\Big(\frac{w_{i,r}u}{q}\Big)\Big)\\
 &+\frac{1}{1-q^{-2}} \frac{\bm \varkappa(qw_{i,r})^{\theta_i}\Z_i(qw_{i,r})}{\W_{i,r}(w_{i,r})\W_{i,r}(q^{2}w_{i,r})}\prod_{j\leftrightarrow i}\W_j(qw_{i,r})\Big(\delta\Big(\frac{qw_{i,r}}{u}\Big)-\delta\big(qw_{i,r}u\big)\Big) \bigg).
\end{split}
\eeq

We claim that the 1st, 2nd, 3rd, 4th, 5th sums in \eqref{topf0} are equal to the 1st, 2nd, 5th, 3rd, 4th sums in \eqref{topf1}, respectively. The equality between the first terms is clear. For the remaining case, we only prove the claim for two cases. We first evaluate the following
\begin{align*}
 &(u-q^2v)\delta\Big(\frac{w_{i,r}}{qu}\Big)\delta\Big(\frac{w_{i,r}v}{q}\Big)\chi_{i,r}^+\chi_{i,r}^- 
 \\
 =\,& \frac{\zeta_i^2(u-q^2v)}{q(1-q^2)^2}\delta\Big(\frac{w_{i,r}}{qu}\Big)\delta\Big(\frac{w_{i,r}v}{q}\Big)\frac{\bm \varkappa(q^{-1}w_{i,r})^{\theta_i}\Z_i(q^{-1}w_{i,r})}{\W_{i,r}(w_{i,r})\W_{i,r}(q^{-2}w_{i,r})}\brown{\frac{1-q^{-2}w_{i,r}^2}{1-q^{-4}w_{i,r}^2}}\prod_{j\leftrightarrow i}\W_j(q^{-1}w_{i,r}).
\end{align*}
Note that we have an extra factor $q^{-2}$ as we move $\pa^{-1}_{i,r}$ through $w_{i,r}$. Thus to prove it is equal to the 1st term in \eqref{topf1}, it suffices to verify that
\begin{align*}
\frac{q^{-1}w_{i,r}-q^3w_{i,r}^{-1}}{q(1-q^2)^2}\frac{1-q^{-2}w_{i,r}^2}{1-q^{-4}w_{i,r}^2}=\frac{q^{-1}w_{i,r}-qw_{i,r}^{-1}}{q^{-1}-q}\frac{1}{1-q^2},
\end{align*}
which clearly holds. Then we consider another situation,
\begin{align*}
&(v-q^2u)\delta\big(qw_{i,r}v\big)\delta\Big(\frac{qw_{i,r}}{u}\Big)\chi_{i,r}^-\chi_{i,r}^+ \\
=&\frac{\zeta_i^2(v-q^2u)\brown{q^3}}{(1-q^2)^2}\delta\big(qw_{i,r}v\big)\delta\Big(\frac{qw_{i,r}}{u}\Big)\frac{\Z_i(qw_{i,r})}{\W_{i,r}(w_{i,r})\W_{i,r}(q^{2}w_{i,r})}\brown{\frac{1-q^{2}w_{i,r}^2}{1-q^{4}w_{i,r}^2}}\prod_{j\leftrightarrow i}\W_j(qw_{i,r}).
\end{align*}
Note that we have an extra factor $q^2$ as we move $\pa_{i,r}$ through $w_{i,r}$. Hence to prove it is identical to the 3rd term in \eqref{topf1}, we need to check that
\[
\frac{(q^{-1}w_{i,r}^{-1}-q^3w_{i,r})q^3}{(1-q^2)^2}\frac{1-q^{2}w_{i,r}^2}{1-q^{4}w_{i,r}^2}=\frac{qw_{i,r}-q^{-1}w_{i,r}^{-1}}{q^{-1}-q}\frac{1}{1-q^{-2}},
\]
which is straightforward. By similar computations, we prove that the 2nd, 3rd, 4th, 5th sums in \eqref{topf0} are equal to the 2nd, 5th, 3rd, 4th sums in \eqref{topf1}, proving the relation \eqref{rel:BB2}.

\subsection{The Serre relation \eqref{rel:Serre2}} 
\label{sec:serre-split}

It is convenient to set $\chi_{k,0}^-=0$ and $w_{k,r}^\pm:=w_{k,r}^{\pm 1}$ for $k\in \I$ and $1\lle r\lle \bv_k$. Let us consider the terms appearing in 
\beq\label{ser1}
\begin{split}
&\mathrm{Sym}_{u_1,u_2}\big[\Phi_{\mu}^\la(B_i(u_1)),[\Phi_{\mu}^\la(B_i(u_2)),\Phi_{\mu}^\la(B_j(v))]_q\big]_{q^{-1}}
\\
=\,&\mathrm{Sym}_{u_1,u_2}\sum_{\ka_1,\ka_2,\ka\in\{\pm\}}\sum_{r_1,r_2=0}^{\bv_i}\sum_{s=0}^{\bv_j}\bigg[\delta\Big(\frac{w_{i,r_1}^{\ka_1}}{qu_1}\Big)\chi_{i,r_1}^{\ka_1},\Big[\delta\Big(\frac{w_{i,r_2}^{\ka_2}}{qu_2}\Big)\chi_{i,r_2}^{\ka_2},\delta\Big(\frac{w_{j,s}^{\ka}}{qv}\Big)\chi_{j,s}^{\ka}\Big]_q\bigg]_{q^{-1}}.
\end{split}
\eeq
Due to Lemma \ref{chirelq}, it follows from  similar computations as in \cite[C(vii)]{FT19} that
\begin{align*}
\bigg[\delta\Big(\frac{w_{i,r_1}^{\ka_1}}{qu_1}\Big)\chi_{i,r_1}^{\ka_1},&\Big[\delta\Big(\frac{w_{i,r_2}^{\ka_2}}{qu_2}\Big)\chi_{i,r_2}^{\ka_2},\delta\Big(\frac{w_{j,s}^{\ka}}{qv}\Big)\chi_{j,s}^{\ka}\Big]_q\bigg]_{q^{-1}}\\
+&\bigg[\delta\Big(\frac{w_{i,r_2}^{\ka_2}}{qu_2}\Big)\chi_{i,r_2}^{\ka_2},\Big[\delta\Big(\frac{w_{i,r_1}^{\ka_1}}{qu_1}\Big)\chi_{i,r_1}^ {\ka_1},\delta\Big(\frac{w_{j,s}^{\ka}}{qv}\Big)\chi_{j,s}^{\ka}\Big]_q\bigg]_{q^{-1}}=0   
\end{align*}
for the cases ($r_1\ne r_2$) and ($r_1=r_2\ne 0$ and $\ka_1=\ka_2$), see also below for a similar calculation. Hence \eqref{ser1} is equal to
\beq\label{ser2}
\begin{split}
&\mathrm{Sym}_{u_1,u_2}\sum_{\ka'\in\{\pm\}}\sum_{s=0}^{\bv_j}\bigg[\delta(u_1)\chi_{i,0}^+,\Big[\delta(u_2)\chi_{i,0}^+,\delta\Big(\frac{w_{j,s}^{\ka'}}{qv}\Big)\chi_{j,s}^{\ka'}\Big]_q\bigg]_{q^{-1}}\\
+\,&\mathrm{Sym}_{u_1,u_2}\sum_{\ka,\ka'\in\{\pm\}}\sum_{r=1}^{\bv_i}\sum_{s=0}^{\bv_j}\bigg[\delta\Big(\frac{w_{i,r}^{\ka}}{qu_1}\Big)\chi_{i,r}^{\ka},\Big[\delta\Big(\frac{w_{i,r}^{-\ka}}{qu_2}\Big)\chi_{i,r}^{-\ka},\delta\Big(\frac{w_{j,s}^{\ka'}}{qv}\Big)\chi_{j,s}^{\ka'}\Big]_q\bigg]_{q^{-1}}.	
\end{split}
\eeq
For $1\lle r\lle \bv_i$, by Lemma \ref{chirelq}, we have
\begin{align*}
&\bigg[\delta\Big(\frac{w_{i,r}^{\ka}}{qu_1}\Big)\chi_{i,r}^{\ka},\Big[\delta\Big(\frac{w_{i,r}^{-\ka}}{qu_2}\Big)\chi_{i,r}^{-\ka},\delta\Big(\frac{w_{j,s}^{\ka'}}{qv}\Big)\chi_{j,s}^{\ka'}\Big]_q\bigg]_{q^{-1}}    \\
=& \Big[\delta\Big(\frac{w_{i,r}^{\ka}}{qu_1}\Big)\chi_{i,r}^{\ka},\delta\Big(\frac{w_{i,r}^{-\ka}}{qu_2}\Big)\delta\Big(\frac{w_{j,s}^{\ka'}}{qv}\Big)\Big(1-q\frac{w_{i,r}^{-\ka}-q^{-1}w_{j,s}^{\ka'}}{q^{-1}w_{i,r}^{-\ka}-w_{j,s}^{\ka'}}\Big)\chi_{i,r}^{-\ka}\chi_{j,s}^{\ka'}\Big]_{q^{-1}}\\
=&\Big[\delta\Big(\frac{w_{i,r}^{\ka}}{qu_1}\Big)\chi_{i,r}^{\ka},\delta\Big(\frac{w_{i,r}^{-\ka}}{qu_2}\Big)\delta\Big(\frac{w_{j,s}^{\ka'}}{qv}\Big)\frac{(q^{-1}-q)w_{i,r}^{-\ka}}{q^{-1}w_{i,r}^{-\ka}-w_{j,s}^{\ka'}}\chi_{i,r}^{-\ka}\chi_{j,s}^{\ka'}\Big]_{q^{-1}}\\
=&\delta\Big(\frac{w_{i,r}^{\ka}}{qu_1}\Big)\delta\Big(\frac{qw_{i,r}^{-\ka}}{u_2}\Big)\delta\Big(\frac{w_{j,s}^{\ka'}}{qv}\Big)\frac{(q^{-1}-q)q^2w_{i,r}^{-\ka}}{qw_{i,r}^{-\ka}-w_{j,s}^{\ka'}}\chi_{i,r}^{\ka}\chi_{i,r}^{-\ka}\chi_{j,s}^{\ka'}\\
&\hskip3cm -q^{-1}\delta\Big(\frac{qw_{i,r}^{\ka}}{u_1}\Big)\delta\Big(\frac{w_{i,r}^{-\ka}}{qu_2}\Big)\delta\Big(\frac{w_{j,s}^{\ka'}}{qv}\Big)\frac{(q^{-1}-q)w_{i,r}^{-\ka}}{q^{-1}w_{i,r}^{-\ka}-w_{j,s}^{\ka'}}\chi_{i,r}^{-\ka}\chi_{j,s}^{\ka'}\chi_{i,r}^{\ka}\\
=& \delta\Big(\frac{w_{i,r}^{\ka}}{qu_1}\Big)\delta\Big(\frac{qw_{i,r}^{-\ka}}{u_2}\Big)\delta\Big(\frac{w_{j,s}^{\ka'}}{qv}\Big)\frac{(q^{-1}-q)q^2w_{i,r}^{-\ka}}{qw_{i,r}^{-\ka}-w_{j,s}^{\ka'}}\brown{\chi_{i,r}^{\ka}\chi_{i,r}^{-\ka}\chi_{j,s}^{\ka'}}\\
&\qquad -q^{-1}\delta\Big(\frac{qw_{i,r}^{\ka}}{u_1}\Big)\delta\Big(\frac{w_{i,r}^{-\ka}}{qu_2}\Big)\delta\Big(\frac{w_{j,s}^{\ka'}}{qv}\Big)\frac{(q^{-1}-q)w_{i,r}^{-\ka}}{q^{-1}w_{i,r}^{-\ka}-w_{j,s}^{\ka'}}\frac{q^{2}w_{i,r}^{\ka}-q^{-1}w_{j,s}^{\ka'}}{qw_{i,r}^{\ka}-w_{j,s}^{\ka'}}\chi_{i,r}^{-\ka}\chi_{i,r}^{\ka}\chi_{j,s}^{\ka'}.
\end{align*}
Similarly, we have
\begin{align*}
&\bigg[\delta\Big(\frac{w_{i,r}^{-\ka}}{qu_2}\Big)\chi_{i,r}^{-\ka},\Big[\delta\Big(\frac{w_{i,r}^{\ka}}{qu_1}\Big)\chi_{i,r}^{\ka},\delta\Big(\frac{w_{j,s}^{\ka'}}{qv}\Big)\chi_{j,s}^{\ka'}\Big]_q\bigg]_{q^{-1}} \\
=& \delta\Big(\frac{qw_{i,r}^{\ka}}{u_1}\Big)\delta\Big(\frac{w_{i,r}^{-\ka}}{qu_2}\Big)\delta\Big(\frac{w_{j,s}^{\ka'}}{qv}\Big)\frac{(q^{-1}-q)q^2w_{i,r}^{\ka}}{qw_{i,r}^{\ka}-w_{j,s}^{\ka'}}\chi_{i,r}^{-\ka}\chi_{i,r}^{\ka}\chi_{j,s}^{\ka'}\\
&\qquad -q^{-1}\delta\Big(\frac{w_{i,r}^{\ka}}{qu_1}\Big)\delta\Big(\frac{qw_{i,r}^{-\ka}}{u_2}\Big)\delta\Big(\frac{w_{j,s}^{\ka'}}{qv}\Big)\frac{(q^{-1}-q)w_{i,r}^{\ka}}{q^{-1}w_{i,r}^{\ka}-w_{j,s}^{\ka'}}\frac{q^{2}w_{i,r}^{-\ka}-q^{-1}w_{j,s}^{\ka'}}{qw_{i,r}^{-\ka}-w_{j,s}^{\ka'}}\brown{\chi_{i,r}^{\ka}\chi_{i,r}^{-\ka}\chi_{j,s}^{\ka'}}.
\end{align*}
Note that for fixed $i,j,r,s,\ka,\ka'$, the term
\[
\delta\Big(\frac{w_{i,r}^{\ka}}{qu_1}\Big)\delta\Big(\frac{qw_{i,r}^{-\ka}}{u_2}\Big)\delta\Big(\frac{w_{j,s}^{\ka'}}{qv}\Big) \chi_{i,r}^{\ka}\chi_{i,r}^{-\ka}\chi_{j,s}^{\ka'}
\]
only appears in the above two commutators. We add up the coefficients and simplify:
\begin{align*}
 \frac{(q^{-1}-q)q^2w_{i,r}^{-\ka}}{qw_{i,r}^{-\ka}-w_{j,s}^{\ka'}}-   q^{-1}\frac{(q^{-1}-q)w_{i,r}^{\ka}}{q^{-1}w_{i,r}^{\ka}-w_{j,s}^{\ka'}}\frac{q^{2}w_{i,r}^{-\ka}-q^{-1}w_{j,s}^{\ka'}}{qw_{i,r}^{-\ka}-w_{j,s}^{\ka'}}=\frac{(q^{-1}-q)(q^{-2}w_{i,r}^{\ka}-q^2w_{i,r}^{-\ka})w_{j,s}^{\ka'}}{(q^{-1}w_{i,r}^{\ka}-w_{j,s}^{\ka'})(qw_{i,r}^{-\ka}-w_{j,s}^{\ka'})}.
\end{align*}
On the other hand, we also have
\beq
\begin{split}
&\Big[\delta(u_1)\chi_{i,0}^+,\Big[\delta(u_2)\chi_{i,0}^+,\delta\Big(\frac{w_{j,s}^{\ka'}}{qv}\Big)\chi_{j,s}^{\ka'}\Big]_q\Big]_{q^{-1}}\\
&=\frac{(q^{-1}-q)(1-q^2)q^{-1}w_{j,s}^{\ka'}}{(1-w_{j,s}^{\ka'})^2}\delta(u_1)\delta(u_2)\delta\Big(\frac{w_{j,s}^{\ka'}}{qv}\Big)\big(\chi_{i,0}^+\big)^2\chi_{j,s}^{\ka'}.
\end{split}
\eeq

Hence, \eqref{ser2} can be transformed as
\begin{align*}
&\sum_{\ka\in\{\pm\}}\sum_{s=0}^{\bv_j}\frac{2(q^{-1}-q)(1-q^2)q^{-1}w_{j,s}^{\ka}}{(1-w_{j,s}^{\ka})^2}\delta(u_1)\delta(u_2)\delta\Big(\frac{w_{j,s}^{\ka}}{qv}\Big)\big(\chi_{i,0}^+\big)^2\chi_{j,s}^{\ka}\\
+&\sum_{\ka\in\{\pm\}}\sum_{r=1}^{\bv_i}\sum_{s=0}^{\bv_j}\bigg(
\frac{(q^{-1}-q)(q^{-2}w_{i,r}-q^2w_{i,r}^{-1})w_{j,s}^{\ka}}{(q^{-1}w_{i,r}-w_{j,s}^{\ka})(qw_{i,r}^{-1}-w_{j,s}^{\ka})}
\delta\Big(\frac{w_{i,r}}{qu_1}\Big)\delta\Big(\frac{qw_{i,r}^{-1}}{u_2}\Big)\delta\Big(\frac{w_{j,s}^{\ka}}{qv}\Big) \chi_{i,r}^+\chi_{i,r}^{-}\chi_{j,s}^{\ka}\\
&\hskip2.2cm+\frac{(q^{-1}-q)(q^{-2}w_{i,r}-q^2w_{i,r}^{-1})w_{j,s}^{\ka}}{(q^{-1}w_{i,r}-w_{j,s}^{\ka})(qw_{i,r}^{-1}-w_{j,s}^{\ka})}
\delta\Big(\frac{w_{i,r}}{qu_2}\Big)\delta\Big(\frac{qw_{i,r}^{-1}}{u_1}\Big)\delta\Big(\frac{w_{j,s}^{\ka}}{qv}\Big) \chi_{i,r}^+\chi_{i,r}^{-}\chi_{j,s}^{\ka}\\
&\hskip2.2cm+\frac{(q^{-1}-q)(q^{-2}w_{i,r}^{-1}-q^2w_{i,r})w_{j,s}^{\ka}}{(qw_{i,r}-w_{j,s}^{\ka})(q^{-1}w_{i,r}^{-1}-w_{j,s}^{\ka})}
\delta\Big(\frac{qw_{i,r}}{u_1}\Big)\delta\Big(\frac{w_{i,r}^{-1}}{qu_2}\Big)\delta\Big(\frac{w_{j,s}^{\ka}}{qv}\Big) \chi_{i,r}^-\chi_{i,r}^{+}\chi_{j,s}^{\ka}\\
&\hskip2.2cm+\frac{(q^{-1}-q)(q^{-2}w_{i,r}^{-1}-q^2w_{i,r})w_{j,s}^{\ka}}{(qw_{i,r}-w_{j,s}^{\ka})(q^{-1}w_{i,r}^{-1}-w_{j,s}^{\ka})}
\delta\Big(\frac{qw_{i,r}}{u_2}\Big)\delta\Big(\frac{w_{i,r}^{-1}}{qu_1}\Big)\delta\Big(\frac{w_{j,s}^{\ka}}{qv}\Big) \chi_{i,r}^-\chi_{i,r}^{+}\chi_{j,s}^{\ka}
\bigg),
\end{align*}
cf. the RHS of \eqref{topf0}. In order to show that the above is equal to the image of the RHS of \eqref{rel:Serre2} under $\Phi_\mu^\la$, one only needs to proceed as in \S\ref{sec:relBB2} by using Lemma \ref{exp}. Note that we also applied identities such as
\[
\frac{q^{-1}w_{j,s}^{\ka}}{(1-w_{j,s}^{\ka})^2}\delta(u_1)\delta(u_2)\delta\Big(\frac{w_{j,s}^{\ka}}{qv}\Big)=\frac{v\delta(u_1)\delta(u_2)}{(u_1-qv)(u_2-qv)}\delta\Big(\frac{w_{j,s}^{\ka}}{qv}\Big)
\]
and
\begin{align*}
\frac{(q^{-1}-q)(q^{-2}w_{i,r}-q^2w_{i,r}^{-1})w_{j,s}^{\ka}}{(q^{-1}w_{i,r}-w_{j,s}^{\ka})(qw_{i,r}^{-1}-w_{j,s}^{\ka})}&
\delta\Big(\frac{w_{i,r}}{qu_1}\Big)\delta\Big(\frac{qw_{i,r}^{-1}}{u_2}\Big)\delta\Big(\frac{w_{j,s}^{\ka}}{qv}\Big)\\=&\,\frac{(q^{-1}-q)(u_1-q^2u_2)v}{(u_1-qv)(u_2-qv)}
\delta\Big(\frac{w_{i,r}}{qu_1}\Big)\delta\Big(\frac{qw_{i,r}^{-1}}{u_2}\Big)\delta\Big(\frac{w_{j,s}^{\ka}}{qv}\Big).  \end{align*}

\section{Proof of Theorem \ref{thmq} for the non-split type}
\label{sec:pf-quasi-split}

In this section, we prove Theorem \ref{thmq} for the non-split type (i.e., with $\tau\ne \mathrm{id}$).

\subsection{Proof for quasi-split type excluding type A$_{2n}$}
\label{ssec:qspf}

In this subsection, we exclude the quasi-split type A$_{2n}$. Hence $c_{i,\tau i}\ne -1$ for all $i\in \I$, and the Serre relation \eqref{rel:Serre3} does not occur and $\wp_i=0$ for all $i\in\I$. It is not hard to see that the verification of all the relations will be either similar to the case for the ordinary shifted affine quantum groups for $c_{i,\tau i}=0$, or the case for shifted affine iquantum groups of split type for $i=\tau i$. Here we only discuss for example the Serre relation for $c_{ij}=-1$ as the relation for $c_{ij}=0$ reduces to the case of shifted affine quantum groups.

Suppose $c_{ij}=-1$. Note that $j\ne \tau i$, then we have several cases. First, one needs to verify that similar identities hold as in Lemma \ref{chirelq}. We proceed case-by-case. 
\begin{enumerate}
    \item If both $i$ and $j$ are not fixed by $\tau$, then this is the case of ordinary shifted affine quantum groups as verified in \cite[Appendix C]{FT19}.
    \item If both $i$ and $j$ are fixed by $\tau$, then this is the split case as verified in \S\ref{sec:serre-split}.
    \item If $i$ is fixed by $\tau$ and $j$ is not, i.e., $i\in \I_0$ and $j\notin \I_0$, then we need to verify the relation \eqref{rel:Serre2}. The details are parallel to that of \S\ref{sec:serre-split}. Again one needs to carefully deal with the terms containing $\chi_{i,0}^+\chi_{i,0}^+$ and $\chi_{i,r}^\pm\chi_{i,r}^\mp$ (those are scalar functions) while all other terms cancel due to Lemma \ref{chirelq}.
    \item If $j$ is fixed by $\tau$ and $i$ is not, i.e., $i\notin \I_0$ and $j\in \I_0$, then we need to verify the relation \eqref{rel:Serre1}. Again, we argue as in \S\ref{sec:serre-split}. Since $i$ is not fixed by $\tau$ and $j\ne \tau i$, there is no way to obtain constant terms (scalar rational functions without difference operators). Thus this case essentially corresponds to the beginning of \S\ref{sec:serre-split}.
\end{enumerate}

\subsection{Completing the proof for the quasi-split type A$_{2n}$}
\label{ssec:qsAIII2n}

In this subsection, we complete the proof of Theorem \ref{thmq} for the case of quasi-split type A$_{2n}$. We shall only verify the most complicated relations \eqref{rel:BB3} and \eqref{rel:Serre3} for the case $j=\tau i$ and $c_{i,\tau i}=-1$ for the iGKLO homomorphism in Theorem \ref{thmq}. In this case, we have $\theta_i=\theta_j=0$. We prove the case $i\rightarrow j$ (as the other case $i\leftarrow j$ is similar). Then $\wp_i=-\hf$ and $\wp_j=\hf$. 

Set $n_i=\bv_i+\bv_j$. For $1\lle r\lle n_i$, denote 
$$
r':= n_i+1-r.
$$ 
As in \cite[\S3.1]{LWW25iY}, it is convenient to extend the notation $w_{k,r}$ and $\partial_{k,r}^{\pm 1}$  to $1\lle r \lle n_i$ for $k\in\{i,j\}$ by the rule:
\beq\label{eq:ext}
w_{k,r}:=w_{\tau k,r'}^{-1},\qquad \qquad \partial_{k,r}:=\partial_{\tau k,r'}^{-1}.
\eeq

\subsubsection{The relation \eqref{rel:BB3}} 
\label{ssec:BB3}
We proceed as in \S\ref{ssec:BB5} by introducing $\chi_{i,r}$ and $\chi_{j,s}$, for $1\lle r,s\lle n_i$, such that
\[
\Phi_\mu^\la(B_i(u))=\sum_{r=1}^{n_i}\delta\Big(\frac{w_{i,r}}{qu}\Big)\chi_{i,r},\qquad \Phi_\mu^\la(B_j(v))=\sum_{s=1}^{n_i}\delta\Big(\frac{w_{j,s}}{qv}\Big)\chi_{j,s}.
\]

\begin{lem}
 \label{lem:qs}
For $r\ne s$, we have
\begin{align*}
(w_{i,r}-q^2w_{i,s})\chi_{i,r}\chi_{i,s}&=(q^2w_{i,r}-w_{i,s})\chi_{i,s}\chi_{i,r},\\
(w_{j,r}-q^2w_{j,s})\chi_{j,r}\chi_{j,s}&=(q^2w_{j,r}-w_{j,s})\chi_{j,s}\chi_{j,r},\\
(w_{i,r}-q^{-1}w_{j,s'})\chi_{i,r}\chi_{j,s'}&=(q^{-1}w_{i,r}-w_{j,s'})\chi_{j,s'}\chi_{i,r}.
\end{align*}
\end{lem}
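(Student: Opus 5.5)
We prove Lemma \ref{lem:qs} by direct computation, in the spirit of Lemma \ref{chirelq}. The first step is to write the operators $\chi_{i,r}$ and $\chi_{j,s}$ explicitly using the extended notation \eqref{eq:ext}. Reading off Theorem \ref{thmq} for $i\to j=\tau i$, each $\chi_{k,r}$ with $k\in\{i,j\}$ has the form
\[
\chi_{k,r}=F_{k,r}(\bm w)\,\partial_{k,r}^{-1},
\]
where $F_{k,r}$ is a rational function in the $w$'s.
\begin{itemize}
\item For $1\lle r\lle \bv_k$ this is the first sum: the prefactor is $\frac{q^{1/2}\zeta_k}{1-q^2}$ and the coefficient is evaluated at $u=w_{k,r}/q$.
\item For $\bv_k<r\lle n_i$ it is the second sum. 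After the substitution $w_{\tau k,r'}=w_{k,r}^{-1}$, the delta $\delta(qw_{\tau k,r'}u)$ becomes $\delta(w_{k,r}/(qu))$ and $\partial_{\tau k,r'}=\partial_{k,r}^{-1}$.
\end{itemize}
So the extension \eqref{eq:ext} puts all terms on the same footing, and the extended operators satisfy $\partial_{k,r}w_{k,s}^{1/2}=q^{\delta_{rs}}w_{k,s}^{1/2}\partial_{k,r}$ in all cases, including between $i$ and $j$ indices via $r\leftrightarrow r'$.

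Next, each identity reduces to a scalar one. Moving the difference operators to the right gives
\[
\chi_{a}\chi_{b}=F_a\,(\partial_a^{-1}\cdot F_b)\,\partial_a^{-1}\partial_b^{-1}.
\]
Hence each identity is equivalent to
\[
(w_a-cw_b)\,\frac{\partial_a^{-1}\cdot F_b}{F_b}=(cw_a-w_b)\,\frac{\partial_b^{-1}\cdot F_a}{F_a},
\]
where $c=q^2$ or $q^{-1}$, and $\partial_a^{-1}$ acts by $w_a\mapsto q^{-2}w_a$.

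Only the factors of $F_b$ that involve $w_a$ contribute to the ratio. For $a=(i,r)$, $b=(i,s)$ these are:
\begin{itemize}
\item the factor $(1-w_{i,r}/w_{i,s})^{-1}$ coming from $\W_{i,s}(w_{i,s})$;
\item the $w^{-1/2}$ prefactors;
\item the factor $W_{\tau i}(u^{-1})$ part of $\W_{i,s}$, once the $r$-index is rewritten in the extended notation.
\end{itemize}
In the extended notation all of these should combine uniformly into a factor of $F_{i,s}$ of the form $\prod_{t\ne s}(1-w_{i,t}/w_{i,s})^{-1}\cdot(\text{monomial})$. The check then becomes the familiar rank-one computation of \cite[Appendix C]{FT19}, yielding the $q^2$ identity. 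The second identity, for $j$, is symmetric.

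For the third identity, the $w_{i,r}$-dependence of $F_{j,s'}$ and the $w_{j,s'}$-dependence of $F_{i,r}$ come from three sources:
\begin{itemize}
\item the factor $\prod_{j\to i}\W_j(u)$, present because $i\to j$ gives $\tau j\to\tau i$, i.e.\ $i\to j$ contributes to $B_j$;
\item the factor $\prod_{\tau j\leftarrow\tau i}\W_{\tau j}(u^{-1})$;
\item the self-interaction $W_{\tau i}(u^{-1})$ inside $\W_{i,r}$.
\end{itemize}
The identity then reduces to a factor $(1-q^{\pm1}w_{i,r}w_{j,s'}^{\pm1})$ transforming correctly, as in the $c_{ij}=-1$ case of Lemma \ref{chirelq}. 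The index $s'$ rather than $s$ is exactly what compensates the reflection $r\mapsto r'$ in \eqref{eq:ext}.

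The main obstacle is bookkeeping. We have to confirm that, across the four sub-cases (each of $r$ and $s$ in the lower or upper range), the prefactors $q^{1/2}$ versus $-q$ (coming from $q^{|\wp_i|}$), the signs, and the half-integer powers $w^{-1/2}$ produce exactly the factor $q^{\mp 1}$ needed. In particular, commuting $\partial$ past $w^{1/2}$ gives factors $q^{\pm1}$ that must match the asymmetry of $c=q^{-1}$. We would first verify the case where both indices lie in the lower range, where no reflection is involved, and then use the symmetry \eqref{eq:invariant}, $\W_i(u)=\W_{\tau i}(u^{-1})$, to transport it to the reflected ranges. Cases not covered by this symmetry are checked directly.
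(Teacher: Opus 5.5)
Your approach is the paper's: its proof is only ``a direct calculation'', and your reduction is the natural way to organize it. You write $\chi_{k,r}=F_{k,r}\partial_{k,r}^{-1}$ in the extended notation \eqref{eq:ext}, then compare $\partial_a^{-1}\cdot F_b/F_b$ with $\partial_b^{-1}\cdot F_a/F_a$.

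Your guess about the uniform shape is correct, and it removes the need for any transport by \eqref{eq:invariant}. In both ranges the denominator of $F_{i,s}$ (namely $\W_{i,s}(w_{i,s})$, resp.\ $\W_{j,s'}(w_{j,s'})$) equals
\[
\pm(-w_{i,s})^{\bv_j}\prod_{t=1}^{n_i}w_{i,t}^{-1/2}\prod_{t\ne s}\Big(1-\frac{w_{i,t}}{w_{i,s}}\Big).
\]
Moreover $Z_i$ and $\prod_{k\to i}\W_k(u)=\prod_{\tau k\leftarrow\tau i}\W_{\tau k}(u^{-1})$ are the same functions of $w_{i,s}$ in both ranges, so the four subcases are one computation. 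The constants ($q^{1/2}\zeta_k$ versus $-q\zeta_k$, and the sign $\pm$) appear as the same product on both sides of each identity and cancel. So your bookkeeping worry about them is vacuous: every power of $q$ comes from the monomials $w^{\pm1/2}$.

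The step that fails as stated is ``the second identity, for $j$, is symmetric''. The formulas are not symmetric under $i\leftrightarrow j$. Since $i\to j$, every $\chi_{j,s}$ carries the extra factor $\W_i(w_{j,s}/q)$, and $\chi_{i,r}$ has no counterpart. In the first sum it comes from $\prod_{k\to j}\W_k(v)$; in the second it comes from $\W_j(v^{-1})=\W_i(v)$. You use this factor for the third identity, but it also depends on every $w_{j,t}$. In extended notation
\[
\W_i(v)=(-v)^{-\bv_i}\prod_{t=1}^{n_i}w_{j,t}^{-1/2}\,(1-w_{j,t}v),
\]
so moving $\partial_{j,r}^{-1}$ through $F_{j,s}$ produces the additional ratio $q\,\frac{1-q^{-3}w_{j,r}w_{j,s}}{1-q^{-1}w_{j,r}w_{j,s}}$. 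The identity holds only because this ratio is symmetric in $r\leftrightarrow s$ and so occurs on both sides. You need to check this, not infer it from symmetry.

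With that added, the first two identities reduce to the rank-one computation. The third works as you anticipate. The factor $(1-w_{i,r}^{-1}w_{j,s'}^{-1})^{-1}$ occurs in both $F_{i,r}$ and $F_{j,s'}$ and cancels. What remains is $(1-qw_{i,r}w_{j,s'}^{-1})$ from $\W_i(w_{j,s'}/q)$, plus the net monomials $w_{i,r}^{-1}$ in $F_{j,s'}$ and $w_{j,s'}^{-1/2}$ in $F_{i,r}$. Their mismatch ($q^2$ versus $q$) yields $c=q^{-1}$.
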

\begin{proof}
Follows by a direct calculation.
\end{proof}

Therefore, by the same type calculation as in \S\ref{sec:relBB2} using Lemma \ref{lem:qs}, we find that all $\chi_{i,r}\chi_{j,s'}$ (and $\chi_{j,s'}\chi_{i,r}$) with $r\ne s$ in the LHS of \eqref{rel:BB5} cancel. Hence we are left with terms involving $\chi_{i,r}\chi_{j,r'}$ and $\chi_{j,r'}\chi_{i,r}$ summed over $r$ from the LHS and the scalar series from the RHS. Note that due to \eqref{eq:ext} we have $\partial_{i,r}=\partial_{j,r'}^{-1}$ and hence these terms do not involve the difference operators.

Set
\begin{align*}
\Xi_i(u)&=\zeta_i\zeta_{j}\brown{\frac{uq^{\wp_i}-(uq^{\wp_i})^{-1}}{u-u^{-1}}}\frac{\Z_i(u)}{\W_i(qu)\W_{i}(q^{-1}u)}\prod_{k\leftrightarrow i}\W_k(u),\\
\Xi_j(u)&=\zeta_i\zeta_{j}\brown{\frac{uq^{\wp_j}-(uq^{\wp_j})^{-1}}{u-u^{-1}}}\frac{\Z_j(u)}{\W_j(qu)\W_{j}(q^{-1}u)}\prod_{k\leftrightarrow j}\W_k(u).	
\end{align*}
Then by \eqref{eq:invariant} we have $\Xi_i(u)=\Xi_j(u^{-1})$. We can write
\[
\frac{\delta(uv)(u-v)}{q^2-1}\big(\Xi_i(u)-\Xi_j(v)\big)
\]
as either
\[
\frac{\delta(uv)(u-u^{-1})}{q^2-1}\big(\Xi_i(u)^+-\Xi_i(u)^-\big)\quad\text{or} \quad
\frac{\delta(uv)(v-v^{-1})}{q^2-1}\big(\Xi_j(v)^+-\Xi_j(v)^-\big).
\]
We shall use the following simple observation:
\beq\label{helper90}
\delta(uv)\delta \Big(\frac{a}{u}\Big)\mathrm{Res}_{u=a}\frac{(u-u^{-1})\Xi_i(u)}{(q^2-1)u}=\delta(uv)\delta \Big(\frac{1}{av}\Big)\mathrm{Res}_{v=a^{-1}}\frac{(v-v^{-1})\Xi_j(v)}{(q^2-1)v},
\eeq
where $a$ is a simple pole of $\Xi_i(u)$, excluding $\pm 1$ due to the factor $(u-u^{-1})$.

Note that $i\to j$. Expressing $\chi_{i,r}\chi_{j,r'}$ for $1\lle r\lle \bv_i$ explicitly, we have
\be
\chi_{i,r}\chi_{j,r'}=-\brown{\frac{1-q^{-3}w_{i,r}^2}{1-q^{-1}w_{i,r}^2}}\frac{q^{\frac32}\zeta_i \zeta_{j}\Z_i(q^{-1}w_{i,r})}{(1-q^2)^2\W_{i,r}(w_{i,r})\W_{i,r}(q^{-2}w_{i,r})}\prod_{k\leftrightarrow i}\W_k(q^{-1}w_{i,r}).
\ee
Thus one can directly verify that for $1\lle r\lle \bv_i$,
\beq\label{helper91}
\begin{split}
(u-q^{-1}v)\delta\Big(\frac{w_{i,r}}{qu}\Big)\delta\Big(\frac{qw_{i,r}^{-1}}{v}\Big)\chi_{i,r}\chi_{j,r'}
=\delta(uv)\delta\Big(\frac{w_{i,r}}{qu}\Big)\mathrm{Res}_{u=q^{-1}w_{i,r}}\frac{(u-u^{-1})\Xi_i(u)}{(q^2-1)u}.
\end{split}
\eeq
Here we used the equality
\[
q^{\frac32}\big(u-q^{-1}v\big)\frac{1-q^{-3}w_{i,r}^2}{1-q^{-1}w_{i,r}^2}=\big(uq^{-\frac12}-u^{-1}q^{\hf}\big),
\]
when $u=q^{-1}w_{i,r}$ and $v=qw_{i,r}^{-1}$.

For $\bv_i< r\lle n_i$, we have
\[
\chi_{i,r}\chi_{j,r'}=-\brown{\frac{1-q^{3}w_{j,r'}^2}{1-qw_{j,r'}^2}}\frac{q^{\frac32}\zeta_i \zeta_{j}\Z_j(qw_{j,r'})}{(1-q^2)^2\W_{j,r'}(w_{j,r'})\W_{j,r'}(q^{2}w_{j,r'})}\prod_{k\leftrightarrow j}\W_k(qw_{j,r'}),
\]
and further that
\beq\label{helper92}
\begin{split}
(u-q^{-1}v)\delta\Big(\frac{w_{i,r}}{qu}\Big)\delta\Big(\frac{qw_{i,r}^{-1}}{v}\Big)\chi_{i,r}\chi_{j,r'}
&\,=\,\delta(uv)\delta\Big(\frac{qw_{j,r'}}{v}\Big)\mathrm{Res}_{v=qw_{j,r'}}\frac{(v-v^{-1})\Xi_j(v)}{(q^2-1)v}\\
&\stackrel{\eqref{helper90}}{=}\delta(uv)\delta\Big(\frac{w_{i,r}}{qu}\Big)\mathrm{Res}_{u=q^{-1}w_{i,r}}\frac{(u-u^{-1})\Xi_i(u)}{(q^2-1)u}.	
\end{split}
\eeq

For $1\lle r\lle \bv_i$, we have
\[
\chi_{j,r'}\chi_{i,r}=-\frac{q^{\brown{\frac52}}\zeta_i \zeta_{j}\Z_j(qw_{i,r})}{(1-q^2)^2\W_{i,r}(w_{i,r})\W_{i,r}(q^{2}w_{i,r})}\prod_{k\leftrightarrow i}\W_k(qw_{i,r}),
\]
where the exponent for $q$ is increased by 1 due to moving $\partial_{i,r}$ through $w_{i,r}^{1/2}$.
Therefore, we obtain
\begin{align}\label{helper93}
(v-q^{-1}u)\delta\Big(\frac{qw_{i,r}}{u}\Big)\delta\Big(\frac{w_{i,r}^{-1}}{qv}\Big)\chi_{j,r'}\chi_{i,r}=\delta(uv)\delta\Big(\frac{qw_{i,r}}{u}\Big)\mathrm{Res}_{u=qw_{i,r}}\frac{(u-u^{-1})\Xi_i(u)}{(q^2-1)u}.	
\end{align}

For $\bv_i< r\lle n_i$, we have
\[
\chi_{j,r'}\chi_{i,r}=-\frac{q^{\brown{\frac12}}\zeta_i \zeta_{j}\Z_j(q^{-1}w_{j,r'})}{(1-q^2)^2\W_{j,r'}(w_{j,r'})\W_{j,r'}(q^{-2}w_{j,r'})}\prod_{k\leftrightarrow j}\W_k(q^{-1}w_{j,r'}),
\]
where the exponent for $q$ is decreased by 1 due to moving $\partial_{j,r'}^{-1}$ through $w_{j,r'}^{1/2}$.
Hence we obtain
\beq\label{helper94}
\begin{split}
(v-q^{-1}u)\delta\Big(\frac{qw_{i,r}}{u}\Big)\delta\Big(\frac{w_{i,r}^{-1}}{qv}\Big)\chi_{j,r'}\chi_{i,r} 
&\,=\,\delta(uv)\delta\Big(\frac{w_{j,r'}}{qv}\Big)\mathrm{Res}_{v=q^{-1}w_{j,r'}}\frac{(v-v^{-1})\Xi_j(v)}{(q^2-1)v}\\
&\stackrel{\eqref{helper90}}{=}\,\delta(uv)\delta\Big(\frac{qw_{i,r}}{u}\Big)\mathrm{Res}_{u=qw_{i,r}}\frac{(u-u^{-1})\Xi_i(u)}{(q^2-1)u}.	
\end{split}
\eeq
Combining \eqref{helper91}--\eqref{helper94}, by Lemma \ref{exp}, the relation \eqref{rel:BB3} is preserved by the map $\Phi_\mu^\la$.

\subsubsection{The relation \eqref{rel:Serre3}}
Consider the image of the LHS of \eqref{rel:Serre3} under the map $\Phi_\mu^\la$,
\beq\label{hp11}
\mathrm{Sym}_{u_1,u_2}\sum_{r_1,r_2=1}^{n_i}\sum_{s=1}^{n_i}\bigg[\delta\Big(\frac{w_{i,r_1}}{qu_1}\Big)\chi_{i,r_1},\Big[\delta\Big(\frac{w_{i,r_2}}{qu_2}\Big)\chi_{i,r_2},\delta\Big(\frac{w_{j,s'}}{qv}\Big)\chi_{j,s'}\Big]_q\bigg]_{q^{-1}}.
\eeq
If $r_1\ne s$ and $r_2\ne s$, then it follows from a calculation similar to that in \S\ref{sec:serre-split} or \cite[C(vii)]{FT19} using Lemma \ref{lem:qs} that 
\[
\mathrm{Sym}_{u_1,u_2}\bigg[\delta\Big(\frac{w_{i,r_1}}{qu_1}\Big)\chi_{i,r_1},\Big[\delta\Big(\frac{w_{i,r_2}}{qu_2}\Big)\chi_{i,r_2},\delta\Big(\frac{w_{j,s'}}{qv}\Big)\chi_{j,s'}\Big]_q\bigg]_{q^{-1}}=0.
\]
Thus only the terms with either $r_1=s$ or $r_2=s$ in \eqref{hp11} survive and will make a nontrivial contribution to the LHS of \eqref{rel:Serre3} under the map $\Phi_\mu^\la$. Denote by $\mathscr X_r$ the sum of the surviving terms containing $\chi_{i,r}$ from \eqref{hp11}, for $1\lle r\lle n_i$. From the discussion above, we have 
\[
\eqref{hp11}=\sum_{r=1}^{n_i}\mathscr X_r,
\]
where
\[
\mathscr X_r=\mathscr X_r^\circ +\sum_{s=1,s\ne r}^{n_i}\big(\mathscr X_s'+\mathscr X_s''\big),
\]
with 
\begin{align}
\mathscr X_r^\circ=\mathrm{Sym}_{u_1,u_2}\bigg[\delta\Big(\frac{w_{i,r}}{qu_1}\Big)\chi_{i,r},\Big[\delta\Big(\frac{w_{i,r}}{qu_2}\Big)\chi_{i,r},\delta\Big(\frac{w_{j,r'}}{qv}\Big)\chi_{j,r'}\Big]_q\bigg]_{q^{-1}},\label{hp12}\\
\mathscr X_s'=\mathrm{Sym}_{u_1,u_2}\bigg[\delta\Big(\frac{w_{i,r}}{qu_1}\Big)\chi_{i,r},\Big[\delta\Big(\frac{w_{i,s}}{qu_2}\Big)\chi_{i,s},\delta\Big(\frac{w_{j,s'}}{qv}\Big)\chi_{j,s'}\Big]_q\bigg]_{q^{-1}},	\label{hp13}\\
\mathscr X_s''=\mathrm{Sym}_{u_1,u_2}\bigg[\delta\Big(\frac{w_{i,s}}{qu_1}\Big)\chi_{i,s},\Big[\delta\Big(\frac{w_{i,r}}{qu_2}\Big)\chi_{i,r},\delta\Big(\frac{w_{j,s'}}{qv}\Big)\chi_{j,s'}\Big]_q\bigg]_{q^{-1}}.\label{hp14}
\end{align}

As seen in \S\ref{ssec:BB3}, the power of $q$ may sometimes increase or decrease by 1 differently depending on $1\lle r\lle \bv_i$ or $\bv_i< r\lle n_i$. Below we shall only exhibit the calculations when $1\lle r\lle \bv_i$, as the other case can be handled similarly. 

We first consider the following from \eqref{hp12} (without symmetrization)
\[
\bigg[\delta\Big(\frac{w_{i,r}}{qu_1}\Big)\chi_{i,r},\Big[\delta\Big(\frac{w_{i,r}}{qu_2}\Big)\chi_{i,r},\delta\Big(\frac{w_{j,r'}}{qv}\Big)\chi_{j,r'}\Big]_q\bigg]_{q^{-1}},
\]
which expands as
\begin{align}
&\,\delta\Big(\frac{w_{i,r}}{qu_1}\Big)\delta\Big(\frac{w_{i,r}}{q^3u_2}\Big)\delta\Big(\frac{q^3w_{j,r'}}{v}\Big)\chi_{i,r}\chi_{i,r}\chi_{j,r'}\label{hp15}\\
+&\,\delta\Big(\frac{w_{i,r}}{qu_1}\Big)\delta\Big(\frac{qw_{i,r}}{u_2}\Big)\delta\Big(\frac{w_{j,r'}}{qv}\Big)\chi_{j,r'}\chi_{i,r}\chi_{i,r}\label{hp16}\\
-&\,(q+q^{-1})\delta\Big(\frac{w_{i,r}}{qu_1}\Big)\delta\Big(\frac{w_{i,r}}{qu_2}\Big)\delta\Big(\frac{qw_{j,r'}}{v}\Big)\chi_{i,r}\chi_{j,r'}\chi_{i,r}.\label{hp17}
\end{align}

Due to the delta functions, we can rewrite
\begin{align*}
\delta(u_2v)&\delta\Big(\frac{w_{i,r}}{qu_2}\Big)\delta\Big(\frac{w_{i,r}}{qu_1}\Big) \frac{(1+q^2)(u_2-v)v\Xi_i(u_2)}{(qu_1-v)(v-q^2u_1^{-1})u_2}\\
&=\delta(u_2v)\delta\Big(\frac{w_{i,r}}{qu_2}\Big)\delta\Big(\frac{w_{i,r}}{qu_1}\Big)\frac{(1+q^2)(u_2-u_2^{-1})u_2^{-1}\Xi_i(u_2)}{(w_{i,r}-u_2^{-1})(u_2^{-1}-q^3w_{i,r}^{-1})u_2}.
\end{align*}
Note that $\Xi_i(u_2)$ contains $(1-w_{i,r}u_2)$ from $\W_j(u_2)$ in the numerator which results in cancellation with the $(w_{i,r}-u_2^{-1})$ in the denominator of the function above. A slightly tedious calculation using \eqref{helper91}--\eqref{helper94} shows that
\begin{align*}
\eqref{hp15}&=\delta(u_2v)\delta\Big(\frac{w_{i,r}}{q^3u_2}\Big)\mathrm{Res}_{u_2=q^{-3}w_{i,r}}\frac{(1+q^2)(u_2-v)v\Xi_i(u_2)}{(qu_1-v)(v-q^2u_1^{-1})u_2}\delta\Big(\frac{w_{i,r}}{qu_1}\Big)\chi_{i,r},\\
\eqref{hp16}&=\delta(u_2v)\delta\Big(\frac{qw_{i,r}}{u_2}\Big)\mathrm{Res}_{u_2=qw_{i,r}}\frac{(1+q^2)(u_2-v)v\Xi_i(u_2)}{(qu_1-v)(v-q^2u_1^{-1})u_2}\delta\Big(\frac{w_{i,r}}{qu_1}\Big)\chi_{i,r},\\
\eqref{hp17}&=\delta(u_2v)\delta\Big(\frac{w_{i,r}}{qu_2}\Big)\mathrm{Res}_{u_2=q^{-1}w_{i,r}}\frac{(1+q^2)(u_2-v)v\Xi_i(u_2)}{(qu_1-v)(v-q^2u_1^{-1})u_2}\delta\Big(\frac{w_{i,r}}{qu_1}\Big)\chi_{i,r}.
\end{align*}

Now we expand the following term from \eqref{hp13},
\[
\bigg[\delta\Big(\frac{w_{i,r}}{qu_1}\Big)\chi_{i,r},\Big[\delta\Big(\frac{w_{i,s}}{qu_2}\Big)\chi_{i,s},\delta\Big(\frac{w_{j,s'}}{qv}\Big)\chi_{j,s'}\Big]_q\bigg]_{q^{-1}},
\]
as
\begin{align}
\delta\Big(\frac{w_{i,r}}{qu_1}\Big)\delta\Big(\frac{w_{i,s}}{qu_2}\Big)\delta\Big(\frac{qw_{j,s'}}{v}\Big)\chi_{i,r}\chi_{i,s}\chi_{j,s'}-q^{-1}\delta\Big(\frac{w_{i,r}}{qu_1}\Big)\delta\Big(\frac{w_{i,s}}{qu_2}\Big)\delta\Big(\frac{qw_{j,s'}}{v}\Big)\chi_{i,s}\chi_{j,s'}\chi_{i,r}\label{type5-1}\\
- q\delta\Big(\frac{w_{i,r}}{qu_1}\Big)\delta\Big(\frac{qw_{i,s}}{u_2}\Big)\delta\Big(\frac{w_{j,s'}}{qv}\Big)\chi_{i,r}\chi_{j,s'}\chi_{i,s}+\delta\Big(\frac{w_{i,r}}{qu_1}\Big)\delta\Big(\frac{qw_{i,s}}{u_2}\Big)\delta\Big(\frac{w_{j,s'}}{qv}\Big)\chi_{j,s'}\chi_{i,s}\chi_{i,r},\label{type3-1}
\end{align}
and the following term from \eqref{hp14},
\begin{align}
&\bigg[\delta\Big(\frac{w_{i,s}}{qu_2}\Big)\chi_{i,s},\Big[\delta\Big(\frac{w_{i,r}}{qu_1}\Big)\chi_{i,r},\delta\Big(\frac{w_{j,s'}}{qv}\Big)\chi_{j,s'}\Big]_q\bigg]_{q^{-1}}\notag\\
=\,&\bigg[\delta\Big(\frac{w_{i,s}}{qu_2}\Big)\chi_{i,s},\delta\Big(\frac{w_{i,r}}{qu_1}\Big)\delta\Big(\frac{w_{j,s'}}{qv}\Big) \frac{(q^{-1}-q)w_{i,r}}{q^{-1}w_{i,r}- w_{j,s'}}\chi_{i,r}\chi_{j,s'} \bigg]_{q^{-1}}\notag	\\
=\,&\delta\Big(\frac{w_{i,r}}{qu_1}\Big)\delta\Big(\frac{w_{i,s}}{qu_2}\Big)\delta\Big(\frac{qw_{j,s'}}{v}\Big)\frac{(q^{-1}-q)w_{i,r}}{q^{-1}w_{i,r}- q^2w_{j,s'}}\chi_{i,s}\chi_{i,r}\chi_{j,s'}\label{type5-2}\\
&-q^{-1}\delta\Big(\frac{w_{i,r}}{qu_1}\Big)\delta\Big(\frac{qw_{i,s}}{u_2}\Big)\delta\Big(\frac{w_{j,s'}}{qv}\Big)\frac{(q^{-1}-q)w_{i,r}}{q^{-1}w_{i,r}- w_{j,s'}}\chi_{i,r}\chi_{j,s'} \chi_{i,s}\label{type3-2}.
\end{align}
The sum of the first term in \eqref{type3-1} and \eqref{type3-2} is equal to
\beq\label{type3-3}
-\frac{q^{-2}w_{i,r}-qw_{j,s'}}{q^{-1}w_{i,r}-w_{j,s'}}\delta\Big(\frac{w_{i,r}}{qu_1}\Big)\delta\Big(\frac{qw_{i,s}}{u_2}\Big)\delta\Big(\frac{w_{j,s'}}{qv}\Big)\chi_{i,r}\chi_{j,s'} \chi_{i,s}.
\eeq
Using the equality
\[
\chi_{i,r}\chi_{j,s'}\chi_{i,s}=\frac{q^{-1}(1-q^{-1}w_{i,r}w_{j,s'}^{-1})(1-w_{i,r}w_{j,s'})}{(1-qw_{i,r}w_{j,s'}^{-1})(1-q^{-4}w_{i,r}w_{j,s'})}\chi_{j,s'}\chi_{i,s}\chi_{i,r}
\]
and
\begin{align*}
1-\frac{q^{-2}w_{i,r}-qw_{j,s'}}{q^{-1}w_{i,r}-w_{j,s'}}&\frac{q^{-1}(1-q^{-1}w_{i,r}w_{j,s'}^{-1})(1-w_{i,r}w_{j,s'})}{(1-qw_{i,r}w_{j,s'}^{-1})(1-q^{-4}w_{i,r}w_{j,s'})}\\
&=\frac{q^{-2}w_{i,r}(1-q^4)(1-q^{-1}w_{j,s'}^2)}{q(w_{j,s'}-qw_{i,r})(1-q^{-4}w_{i,r}w_{j,s'})},
\end{align*}
together with \eqref{helper91}--\eqref{helper94}, we find that 
\[
\eqref{type3-1}+\eqref{type3-2}=\delta(u_2v)\delta\Big(\frac{qw_{i,s}}{u_2}\Big)\mathrm{Res}_{u_2=qw_{i,s}}\frac{(1+q^2)(u_2-v)v\Xi_i(u_2)}{(qu_1-v)(v-q^2u_1^{-1})u_2}\delta\Big(\frac{w_{i,r}}{qu_1}\Big)\chi_{i,r}.
\]

By Lemma \ref{lem:qs}, we have
\[
\eqref{type5-2}=\delta\Big(\frac{w_{i,r}}{qu_1}\Big)\delta\Big(\frac{w_{i,s}}{qu_2}\Big)\delta\Big(\frac{qw_{j,s'}}{v}\Big)\frac{(q^{-1}-q)w_{i,r}}{w_{i,r}-qw_{j,s'}}\chi_{i,s}\chi_{j,s'}\chi_{i,r}.
\]
Hence the sum of the second term in \eqref{type5-1} and \eqref{type5-2} is equal to
\[
\delta\Big(\frac{w_{i,r}}{qu_1}\Big)\delta\Big(\frac{w_{i,s}}{qu_2}\Big)\delta\Big(\frac{qw_{j,s'}}{v}\Big)\frac{w_{j,s'}-qw_{i,r}}{w_{i,r}-qw_{j,s'}} \chi_{i,s}\chi_{j,s'}\chi_{i,r}.
\]
Using this and the equality
\[
\chi_{i,r}\chi_{i,s}\chi_{j,s'}=\frac{q^{-1}(1-q^{-3}w_{i,r}w_{j,s'}^{-1})(1-q^2w_{i,r}w_{j,s'})}{(1-q^{-2}w_{i,r}w_{j,s'})(1-q^{-1}w_{i,r}w_{j,s'}^{-1})}\chi_{i,s}\chi_{j,s'}\chi_{i,r}
\]
together with 
\eqref{helper91}--\eqref{helper94}, 
we obtain 
\begin{align*}
\eqref{type5-1}+\eqref{type5-2}&=\delta\Big(\frac{w_{i,r}}{qu_1}\Big)\delta\Big(\frac{w_{i,s}}{qu_2}\Big)\delta\Big(\frac{qw_{j,s'}}{v}\Big)
\frac{(q^4-1)w_{i,r}(1-qw_{j,s'}^2)}{q(q^2-w_{i,r}w_{j,s'})(qw_{j,s'}-w_{i,r})}\chi_{i,s}\chi_{j,s'}\chi_{i,r}
\\
&=\delta(u_2v)\delta\Big(\frac{w_{i,s}}{qu_2}\Big)\mathrm{Res}_{u_2=q^{-1}w_{i,s}}\frac{(1+q^2)(u_2-v)v\Xi_i(u_2)}{(qu_1-v)(v-q^2u_1^{-1})u_2}\delta\Big(\frac{w_{i,r}}{qu_1}\Big)\chi_{i,r}.
\end{align*}
Summing all these equations involving residues in this subsection, it follows from Lemma \ref{exp} that this sum is equal to the component $\delta\big(\frac{w_{i,r}}{qu_1}\big)\chi_{i,r}$ in the image of 
\[
\frac{\delta(u_2v)(1+q^2)(u_2-v)v}{(qu_1-v)(v-q^2u_1^{-1})}\big(\acute{\Theta}_i(u_2)-\acute{\Theta}_{\tau i}(v)\big)B_i(u_1)
\]
under the map $\Phi_{\mu}^{\la}$. Finally, taking the symmetrization on $u_1,u_2$, we complete the proof that the relation \eqref{rel:Serre3} is preserved by $\Phi_\mu^\la$.

\bibliographystyle{amsalpha}
\bibliography{references}

\end{document}